\documentclass[11pt,hyp,]{nyjm}
\usepackage{hyperref}
\hypersetup{nesting=true,debug=true,naturalnames=true}
\usepackage{graphicx,amssymb,upref}

\let\<\langle
\let\>\rangle

\let\uml\"

\title[On the number of ramified primes]{On the number of ramified primes in specializations of function fields over $\Qq$}

\author{Lior Bary-Soroker}
\address{School of Mathematical Sciences, Tel Aviv University, Ramat Aviv, Tel Aviv 6997801, Israel}
\email{barylior@post.tau.ac.il}

\author{Fran\c cois Legrand}
\address{School of Mathematical Sciences, Tel Aviv University, Ramat Aviv, Tel Aviv 6997801, Israel}
\address{Department of Mathematics and Computer Science, the Open University of Israel, Ra'anana 4353701, Israel}
\email{flegrand@post.tau.ac.il}

\thanks{The first author is partially supported by the Israel Science Foundation (grant No. 40/14). The second author is partially supported by the Israel Science Foundation (grants No. 40/14 and No. 696/13).} 

\keywords{Ramification, function field extension, specialization, central limit theorem}

\subjclass[2010]{11K65, 11N37, 11N56, 11R44, 11R58, 12E05, 12E25}

\newcommand{\Zz}{\mathbb{Z}}
\newcommand{\Qq}{\mathbb{Q}} 
\usepackage{enumitem}
{\theoremstyle{plain}
\newtheorem{theorem}{Theorem}[section]    
\newtheorem{lemma}[theorem]{Lemma}       
\newtheorem{proposition}[theorem]{Proposition}  
\newtheorem{corollary}[theorem]{Corollary}   
}
{\theoremstyle{remark}
\newtheorem{definition}[theorem]{Definition}      
\newtheorem{remark}[theorem]{Remark}   
}

\begin{document} 
 
\begin{abstract}  
We study the number of ramified prime numbers in finite Galois extensions of $\mathbb{Q}$ obtained by specializing a finite Galois extension of $\mathbb{Q}(T)$. Our main result is a central limit theorem for this number. We also give some Galois theoretical applications.
\end{abstract} 

\maketitle

\tableofcontents

\section{Introduction}

Given an indeterminate $T$, the {\it{specialization}} of a finite Galois extension $E/\Qq(T)$ with Galois group $G$ at a point $t_0 \in \mathbb{P}^1(\Qq)$, which is not a branch point, is a finite Galois extension of $\Qq$ whose Galois group is a subgroup of $G$; we denote it by $E_{t_0}/\Qq$ (see \S\ref{2.1} for basic terminology). For example, if $E$ is the splitting field over $\Qq(T)$ of a monic polynomial $P(T,Y) \in \Qq[T][Y]$ which is separable in $Y$, then, $E_{t_0}$ is the splitting field over $\Qq$ of $P(t_0,Y)$ (for all but finitely many $t_0\in \Qq$).

\subsection{The arithmetic function ${\rm{Ram}}_{E/\mathbb{Q}(T)}$}

In this paper, we are interested in the number of prime numbers ramifying in finite Galois extensions of $\Qq$ obtained by specializing a finite Galois extension of $\mathbb{Q}(T)$ at positive integers. More precisely, let us define:

\begin{definition}
Let $E/\Qq(T)$ be a finite Galois extension. Given a positive integer $n$ which is not a branch point, let 
$${\rm{Ram}}_{E/\mathbb{Q}(T)}(n)$$ be the number of ramified prime numbers in the specialization $E_n/\Qq$. If $n$ is a branch point, we set arbitrarily ${\rm{Ram}}_{E/\mathbb{Q}(T)}(n) = -1$.
\end{definition}

\noindent
Note that ${\rm{Ram}}_{E/\mathbb{Q}(T)}$ depends on the choice of the indeterminate $T$.

\begin{remark} \label{rem 1.2}
If $E/\Qq(T)$ is trivial over $\overline{\Qq}$ \footnote{i.e., if the compositum of $E$ and $\overline{\Qq}(T)$ (in a given algebraic closure of $\Qq(T)$) is $\overline{\Qq}(T)$ or, equivalently, if there exists a number field $F$ such that $E=F(T)$.}, then, there are no branch points and the extension $E_{t_0}/\Qq$ does not depend on $t_0$. In particular, the function ${\rm{Ram}}_{E/\mathbb{Q}(T)}$ is constant. {\it{Hence, we tactically assume throughout this paper that the extension $E/\Qq(T)$ is not trivial over $\overline{\Qq}$.}}
\end{remark}

Some properties of the function ${\rm{Ram}}_{E/\mathbb{Q}(T)}$ can be derived from results in the literature. For example, it is unbounded. More precisely, the second author \cite{Leg16, Leg17} proves that, given a finite Galois extension $E/\Qq(T)$ with Galois group $G$ and a finite set $\mathcal{S}$ of sufficiently large suitable prime numbers (depending on the extension $E/\Qq(T)$), there exist infinitely many positive integers $n$ such that the specialization of $E/\Qq(T)$ at $n$ has Galois group $G$ and ramifies at each prime number of $\mathcal{S}$ \footnote{Actually the inertia groups at prime numbers in $\mathcal{S}$ in the specializations can be prescribed and explicit bounds on their discriminants are given.}. In particular, given a positive integer $m$, there exist infinitely many positive integers $n$ such that ${\rm{Gal}}(E_n/\Qq)=G$ and ${\rm{Ram}}_{E/\Qq(T)}(n) \geq m$. 

On the other hand, the first author and Schlank \cite{BSS16} prove that the function ${\rm{Ram}}_{E/\mathbb{Q}(T)}$ does not tend to $\infty$. Furthermore, several works consist in producing, for some finite groups $G$ and some specific finite Galois extensions $E/\Qq(T)$ with Galois group $G$, some positive integers $n$ such that the specialization $E_n/\Qq$ has Galois group $G$ and the number ${\rm{Ram}}_{E/\Qq(T)}(n)$ is small; see, e.g., \cite{JR03, MR05, JR07, Rob11, BSS16}. For example, for $G=S_N$ ($N \geq 3$) and some specific realizations over $\mathbb{Q}(T)$ of $S_N$ with 3 branch points, one has ${\rm{Ram}}_{E/\Qq(T)}(n) \leq 3$ for infinitely many positive integers $n$; see \cite{BSS16} (in \emph{loc.cit.} the infinite prime is also counted).

\subsection{Main result} 

We study the statistical properties of the arithmetic function ${\rm{Ram}}_{E/\Qq(T)}$ for a given finite Galois extension $E/\Qq(T)$. 

Recall that the absolute Galois group of $\Qq$ acts on the branch points of the extension $E/\Qq(T)$ lying in $\overline{\Qq}$ (i.e., which are different from $\infty$). Let $r$ be the number of orbits under this action. By the Riemann-Hurwitz formula, one has $r \geq 1$ (as the extension $E/\Qq(T)$ has been assumed not to be trivial over $\overline{\Qq}$; see Remark \ref{rem 1.2}).

\begin{theorem} \label{BSL}
For each positive integer $k$, one has
$$\lim\limits_{\substack{N \to \infty }} \frac{1}{N} \sum_{0<n \leq N} \bigg(\frac{{\rm{Ram}}_{E/\mathbb{Q}(T)}(n) - r \log \log(N)}{\sqrt{r \log \log (N)}} \bigg)^k = \frac{1}{\sqrt{2 \pi}} \int_{- \infty}^{+\infty} t^k e^\frac{-t^2}{2} \, dt.$$
\end{theorem}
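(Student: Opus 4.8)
The plan is to reduce Theorem~\ref{BSL} to an Erd\H{o}s--Kac-type central limit theorem for the number of prime divisors of the values of an explicit polynomial attached to $E/\Qq(T)$, and then to carry out the classical method-of-moments proof of such a statement while tracking the constant.

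\textbf{Step 1: algebraic reduction.} First I would fix primitive polynomials $m_1, \dots, m_r \in \Zz[T]$ whose sets of roots are, respectively, the $r$ Galois orbits among the finite branch points of $E/\Qq(T)$, and set $M = m_1 \cdots m_r \in \Zz[T]$. The good reduction theory of covers (Beckmann's theorem, in the quantitative form already used in \cite{Leg13a} and \cite{Leg14}) provides a finite set $\mathcal{S}_0$ of primes, depending only on $E/\Qq(T)$ and on $T$, such that for every positive integer $n$ that is not a branch point and every prime $p \notin \mathcal{S}_0$, the prime $p$ ramifies in $E_n/\Qq$ if and only if $p \mid M(n)$ (equivalently, $n$ is $p$-adically close to a branch point). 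Hence $\mathrm{Ram}_{E/\Qq(T)}(n) = \omega(M(n)) + O(1)$ with an absolutely bounded error term, where $\omega$ denotes the number of distinct prime divisors. Since adding a bounded perturbation, and discarding the finitely many $n \leq N$ that are branch points, each alter the normalized $k$-th moment of the theorem by $o(1)$ (by H\"older's inequality, once the moments are known to remain bounded), it suffices to prove Theorem~\ref{BSL} with $\mathrm{Ram}_{E/\Qq(T)}(n)$ replaced by $\omega(M(n))$.

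\textbf{Step 2: equidistribution of root counts.} Next I would isolate the sole place where the geometry of $E/\Qq(T)$ intervenes. Let $\rho_M(p)$ be the number of roots of $M$ in $\Ff_p$; for squarefree $q$ one has $\rho_M(q) = \prod_{p \mid q} \rho_M(p)$ and $\rho_M(q) \leq (\deg M)^{\omega(q)}$. Because the $m_i$ are distinct monic irreducibles, the number field $\Qq[x]/(m_i)$ has, on average over $p$, exactly one prime of residue degree one above $p$; applying the prime ideal theorem (with its classical error term) to each of the $r$ fields and summing by parts gives
$$\sum_{p \leq x} \frac{\rho_M(p)}{p} = r \log\log x + O(1), \qquad \sum_{p} \frac{\rho_M(p)^2}{p^2} < \infty.$$
This is precisely where the factor $r$ in Theorem~\ref{BSL} originates.

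\textbf{Step 3: truncation and the method of moments.} Now fix $k$ and truncate at $z = N^{1/(2k)}$, writing $\omega(M(n)) = \omega_z(M(n)) + T(n)$, where $\omega_z$ counts prime divisors $\leq z$. Since $|M(n)| \leq C N^{\deg M}$ for $1 \leq n \leq N$, every $M(n)$ has at most $3k \deg M$ prime factors exceeding $z$ once $N$ is large, so $T(n) \leq 3k \deg M$ pointwise and $\frac{1}{N}\sum_{0<n\le N} T(n)^k = O_k(1) = o((\log\log N)^{k/2})$. For the truncated function, any product of at most $k$ primes $\leq z$ is $\leq z^k = N^{1/2} = o(N)$, so the Chinese remainder theorem yields $\#\{0 < n \leq N : q \mid M(n)\} = \rho_M(q) N / q + O(\rho_M(q))$ for each squarefree $q \leq N^{1/2}$, with total error $o(N)$ in all the moment computations; thus the indicator functions $n \mapsto \mathbf{1}_{p \mid M(n)}$, $p \leq z$, behave like independent Bernoulli variables of mean $\rho_M(p)/p$ up to negligible errors. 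The classical expansion of the centered $k$-th moment (in which only pairings survive in the limit, exactly as in the proof of the Erd\H{o}s--Kac theorem), together with $\lambda_z := \sum_{p \leq z} \rho_M(p)/p = r \log\log z + O(1) = r\log\log N + O_k(1)$, gives
$$\frac{1}{N} \sum_{0 < n \leq N} \big(\omega_z(M(n)) - \lambda_z\big)^k \sim \lambda_z^{k/2} \cdot \frac{1}{\sqrt{2\pi}} \int_{-\infty}^{+\infty} t^k e^{-t^2/2}\, dt,$$
the right-hand constant vanishing for $k$ odd. Since $O_k(1) = o(\sqrt{\log\log N})$, one may replace $\lambda_z$ by $r\log\log N$ and the scaling by $\sqrt{r\log\log N}$ at a cost of $o(1)$ after normalization; reassembling $\omega(M(n)) = \omega_z(M(n)) + T(n)$ and applying H\"older's inequality (the remainder tends to $0$ in every $L^k$) then pushes the limit through term by term and yields the asserted value.

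\textbf{Expected main obstacle.} I expect the crux to be the truncation in Step~3: one must control the large prime factors of $M(n)$ in \emph{every} moment rather than merely in probability --- the $k$-dependent choice $z = N^{1/(2k)}$ is what makes this manageable --- and one must extract the exact constant $r$ in both centering and scaling, which rests on the ``one degree-one prime per orbit, on average'' statement of Step~2. Step~1 is essentially a citation, but making the uniform $O(1)$ bound on the exceptional primes fully explicit still requires some care.
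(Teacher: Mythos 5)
There is a genuine gap in your Step 1. For a good prime $p$, divisibility $p \mid M(n)$ is \emph{not} equivalent to ramification of $p$ in $E_n/\Qq$: by the Specialization Inertia Theorem (the Beckmann-type result you cite), if $n$ meets the branch point $t_i$ modulo $p$, then the ramification index of $p$ in $E_n/\Qq$ equals $e_i/\gcd(e_i, v_p(P_i(n)))$, where $e_i$ is the ramification index of $(T-t_i)$ in $E\overline{\Qq}/\overline{\Qq}(T)$. So $p$ may divide $M(n)$ and still be unramified, namely whenever $e_i \mid v_p(P_i(n))$ (already for $E=\Qq(\sqrt{T})$: a prime with $v_p(n)$ even divides $M(n)=n$ but does not ramify in $\Qq(\sqrt{n})/\Qq$). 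Consequently your claim that ${\rm{Ram}}_{E/\Qq(T)}(n) = \omega(M(n)) + O(1)$ with an \emph{absolutely bounded} error is false: up to $O(1)$ the difference equals $-\sum_{i=1}^r m_{e_i}(P_i(n))$, where $m_a(m)$ counts the primes $p$ with $v_p(m)>0$ and $a \mid v_p(m)$, and this quantity is unbounded in $n$. Your reduction to $\omega(M(n))$ therefore requires an additional argument showing that this discrepancy is negligible in every moment. This is exactly the extra step the paper supplies: since the primes counted by $m_{e_i}(P_i(n))$ satisfy $p^2 \mid P_i(n)$, a Chinese-remainder count together with the convergence of $\sum_p \nu_i(p^2)/p^2$ (with $\nu_i$ the number of roots of $P_i$ modulo $p^2$) gives $\sum_{0<n\le N} m_{e_i}^k(P_i(n)) = O_k(N)$ for every $k$, whence the $k$-th moment of ${\rm{Ram}}_{E/\Qq(T)}(n)-\omega(P_E(n))$ is $O_k(N)$ and is killed by the normalization $(\log\log N)^{k/2}$. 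Without some version of this averaged bound your Step 1 does not go through.

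The remainder of your proposal is sound in outline but does more work than necessary: Steps 2 and 3 essentially reprove the Erd\H{o}s--Kac theorem for polynomial values (the identification of the constant $r$ via the degree-one primes of the fields $\Qq[x]/(m_i)$, the truncation at $z=N^{1/(2k)}$, and the pairing computation), which is Halberstam's theorem combined with Landau's result and is cited as such in the paper. Also note a minor point handled in the paper and absent from your sketch: one needs $P_E(n)>0$ for $\omega(P_E(n))$ to make sense for all $n$, which is arranged by a harmless shift $T \mapsto T-\alpha$ of the parameter.
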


\noindent
Although ${\rm{Ram}}_{E/\Qq(T)}$ depends on the choice of $T$, the limit distribution of the normalization of ${\rm{Ram}}_{E/\Qq(T)}$ given in Theorem \ref{BSL} does not.

Taking $k=1$ and $k=2$ in Theorem \ref{BSL} gives the following:
$$\frac{1}{N}\sum_{0<n \leq N} {\rm{Ram}}_{E/\mathbb{Q}(T)}(n) \, \, {\sim} \, \, r \log \log (N), \quad N \to \infty,$$
$$\frac{1}{N} \sum_{0<n \leq N} \big({\rm{Ram}}_{E/\mathbb{Q}(T)}(n) - r \log \log (N) \big)^2 \, \, {\sim} \, \, r \log \log (N) , \quad N \to \infty.$$

Moreover, by the method of moments (see, e.g., \cite[Example 30.1 and Theorem 30.2]{Bil95}), Theorem \ref{BSL} provides the limit distribution of our normalization of ${\rm{Ram}}_{E/\Qq(T)}$. 

For every real number $a$, set $$I(a)=\frac{1}{\sqrt{2 \pi}} \int_{- \infty}^{a} e^\frac{-t^2}{2} \, dt.$$

\begin{theorem} \label{BSL bis}
For every real number $a$, one has $$\lim\limits_{\substack{N \to \infty }} \frac{1}{N} \left \vert \left \{0<n \leq N \, : \, \frac{{\rm{Ram}}_{E/\mathbb{Q}(T)}(n) - r \log \log(N)}{\sqrt{r \log \log (N)}} \leq a \right \} \right \vert = I(a).$$
\end{theorem}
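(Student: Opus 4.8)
Theorem \ref{BSL bis} is a pure consequence of Theorem \ref{BSL} via the method of moments, exactly as the paragraph preceding the statement announces; so the task is to spell out that deduction. First I would recall the precise form of the method of moments that is needed: if $(X_N)_N$ is a sequence of real-valued random variables on some probability spaces, $Y$ is a random variable whose distribution is \emph{determined by its moments}, and $\mathbb{E}[X_N^k] \to \mathbb{E}[Y^k]$ for every integer $k \geq 1$ (and $\mathbb{E}[X_N^0]=1$ trivially), then $X_N$ converges to $Y$ in distribution, i.e.\ $\mathbb{P}(X_N \leq a) \to \mathbb{P}(Y \leq a)$ at every continuity point $a$ of the limiting cumulative distribution function. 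This is \cite[Theorem 30.2]{Bil95}; the fact that the standard Gaussian distribution is determined by its moments is \cite[Example 30.1]{Bil95}, and since its cdf $I$ is continuous on all of $\Rr$, convergence holds at every real number $a$, not merely at continuity points.

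Next I would set up the dictionary between the analytic statement of Theorem \ref{BSL} and this probabilistic framework. For each $N$, let $\Omega_N = \{1,2,\dots,\lfloor N\rfloor\}$ equipped with the uniform probability measure, and let $X_N \colon \Omega_N \to \Rr$ be the random variable
\[
X_N(n) = \frac{{\rm{Ram}}_{E/\mathbb{Q}(T)}(n) - r \log\log(N)}{\sqrt{r\log\log(N)}},
\]
defined for $N$ large enough that $\log\log(N) > 0$. Then for every integer $k \geq 1$,
\[
\mathbb{E}[X_N^k] = \frac{1}{\lfloor N \rfloor}\sum_{0 < n \leq N} X_N(n)^k,
\]
which differs from the average $\frac{1}{N}\sum_{0<n\leq N} X_N(n)^k$ appearing in Theorem \ref{BSL} only by the harmless replacement of $N$ by $\lfloor N \rfloor$ in the outer normalization; since $\lfloor N\rfloor/N \to 1$ and the sum is $O(N (\log\log N)^{k/2})$ by crude bounds, the two quantities have the same limit. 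Hence Theorem \ref{BSL} gives $\mathbb{E}[X_N^k] \to \frac{1}{\sqrt{2\pi}}\int_{-\infty}^{+\infty} t^k e^{-t^2/2}\,dt = \mathbb{E}[Z^k]$, where $Z$ is standard Gaussian, for every $k \geq 1$.

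Finally I would invoke the method of moments as recalled above to conclude that $X_N \to Z$ in distribution, and then translate back: $\mathbb{P}(X_N \leq a)$ is exactly $\frac{1}{\lfloor N\rfloor}\big|\{0<n\leq N : X_N(n) \leq a\}\big|$, which has the same limit as $\frac{1}{N}\big|\{0<n\leq N : X_N(n)\leq a\}\big|$, and $\mathbb{P}(Z\leq a) = I(a)$; since $I$ is continuous everywhere this holds for all $a \in \Rr$, which is the assertion of Theorem \ref{BSL bis}. There is essentially no obstacle here: the only point requiring a word of care is the bookkeeping between $N$ and $\lfloor N \rfloor$ in the normalizations, together with citing that the Gaussian is moment-determined and has continuous cdf so that the conclusion is uniform over all $a$ — everything substantive has already been done in Theorem \ref{BSL}.
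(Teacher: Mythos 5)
Your proposal is correct and is exactly the paper's route: the paper derives Theorem \ref{BSL bis} from Theorem \ref{BSL} by citing the method of moments (\cite[example 30.1 and theorem 30.2]{Bil95}), which is precisely the deduction you spell out. The only remark is that $N$ ranges over positive integers here, so the $\lfloor N\rfloor$ bookkeeping is vacuous, but including it does no harm.
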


\noindent
Similar results hold for finite extensions $E/\Qq(T)$ which are not necessarily Galois since, in this case, ${\rm{Ram}}_{E/\Qq(T)}={\rm{Ram}}_{\widehat{E}/\Qq(T)}$, with $\widehat{E}$ the Galois closure of $E$ over $\Qq(T)$ (see $\S$\ref{5}).

\subsection{Applications}

Below, we give three corollaries of Theorem \ref{BSL} (see $\S$\ref{3} for the proofs).

\subsubsection{Application to inverse Galois theory}

A classical motivation to study specializations of finite Galois extensions of $\Qq(T)$ is the {\it{inverse Galois problem}}: does every finite group $G$ occur as the Galois group of a Galois extension of $\mathbb{Q}$? Indeed, a way to realize $G$ is by specializing a Galois extension $E/\mathbb{Q}(T)$ with Galois group $G$: from the {\it{Hilbert irreducibility theorem}}, there exist infinitely many positive integers $n$ each of which satisfies the {\it{Hilbert specialization property}}, i.e., such that the specialization $E_n/\Qq$ still has Galois group $G$. Many finite groups have been shown to occur as a Galois group over $\Qq$ by this method; we refer to \cite{MM99} for more details and references, and to \cite{Zyw14} for more recent results.

We show that Theorem \ref{BSL} still holds if we restrict to the set of positive integral specialization points which satisfy the Hilbert specialization property:

\begin{corollary} \label{coro 1}
Denote the Galois group of the extension $E/\Qq(T)$ by $G$. Then, for each positive integer $k$, one has
$$\lim\limits_{\substack{N \to \infty }} \frac{1}{N} \hspace{-1.5mm} \sum_{\substack{{0<n \leq N} \\  {\rm{Gal}}(E_n/\Qq)= G}} \hspace{-2mm}\bigg(\frac{{\rm{Ram}}_{E/\mathbb{Q}(T)}(n) - r \log \log(N)}{\sqrt{r \log \log (N)}} \bigg)^k = \frac{1}{\sqrt{2 \pi}} \int_{- \infty}^{+\infty} \hspace{-2mm} t^k e^\frac{-t^2}{2} \, dt.$$
\end{corollary}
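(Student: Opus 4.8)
The plan is to derive Corollary~\ref{coro 1} from Theorem~\ref{BSL} by showing that, for every fixed positive integer $k$, the integers failing the Hilbert specialization property contribute negligibly to the $k$-th moment. For $N \geq 1$ set
$$\mathcal{B}(N) = \{\, 0 < n \leq N \ : \ {\rm Gal}(E_n/\Qq) \neq G \,\},$$
where branch points are by convention included in $\mathcal{B}(N)$. Then the difference between the averaged sum appearing in Theorem~\ref{BSL} and the one in Corollary~\ref{coro 1} is exactly
$$\frac{1}{N} \sum_{n \in \mathcal{B}(N)} \left( \frac{{\rm{Ram}}_{E/\Qq(T)}(n) - r \log\log(N)}{\sqrt{r \log\log(N)}} \right)^{k},$$
so it suffices to prove that this quantity tends to $0$ as $N \to \infty$.

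I would use two classical inputs. The first is a quantitative form of Hilbert's irreducibility theorem (in the spirit of S.\ D.\ Cohen's work, or of Serre's treatment via resolvent polynomials): since $E/\Qq(T)$ is a finite Galois extension with group $G$ which is not trivial over $\overline{\Qq}$, the number of non-Hilbertian integral specialization points satisfies $|\mathcal{B}(N)| = O(N^{1/2} \log N)$; in particular $|\mathcal{B}(N)| = o\big(N / (\log N)^{k+1}\big)$ for every fixed $k$, which is all that will be needed. The second is a crude pointwise upper bound for ${\rm{Ram}}_{E/\Qq(T)}$: any prime ramifying in $E_n/\Qq$ either lies in a finite set of ``bad'' primes depending only on $E/\Qq(T)$, or divides the value at $n$ of a fixed nonzero polynomial attached to the branch locus of $E/\Qq(T)$. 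Hence ${\rm{Ram}}_{E/\Qq(T)}(n)$ is at most a constant plus the number of prime divisors of a nonzero integer of size polynomial in $n$, so ${\rm{Ram}}_{E/\Qq(T)}(n) = O(\log n)$ (the value $-1$ at branch points being harmless since there are finitely many). Consequently, for $N$ large and all $n \leq N$,
$$\left| \frac{{\rm{Ram}}_{E/\Qq(T)}(n) - r \log\log(N)}{\sqrt{r \log\log(N)}} \right| = O(\log N),$$
uniformly in $n$.

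Combining the two inputs yields
$$\left| \frac{1}{N} \sum_{n \in \mathcal{B}(N)} \left( \frac{{\rm{Ram}}_{E/\Qq(T)}(n) - r \log\log(N)}{\sqrt{r \log\log(N)}} \right)^{k} \right| \ \leq \ \frac{1}{N} \sum_{n \in \mathcal{B}(N)} O\big( (\log N)^k \big) \ = \ O\!\left( \frac{|\mathcal{B}(N)| \, (\log N)^k}{N} \right) \ = \ o(1),$$
so the sum in Corollary~\ref{coro 1} converges to the same limit as the sum in Theorem~\ref{BSL}, namely $\frac{1}{\sqrt{2\pi}} \int_{-\infty}^{+\infty} t^k e^{-t^2/2}\, dt$.

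As for the difficulty: there is essentially no hard step, provided one has on hand (i) a polynomial-size bound for the discriminant of $E_n/\Qq$ and (ii) a power-saving count of non-Hilbertian $n$; both are standard. The only points requiring a little care are to make sure the quantitative Hilbert irreducibility statement is applied to the event ``${\rm Gal}(E_n/\Qq) \neq G$'' itself (via finitely many resolvent polynomials) rather than merely to reducibility of one specialized defining polynomial, and that the resulting power-saving in $|\mathcal{B}(N)|$ dominates $(\log N)^k$ simultaneously for every $k$ — which it does, automatically, from the $O(N^{1/2}\log N)$ bound.
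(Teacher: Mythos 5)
Your proposal is correct and follows essentially the same route as the paper: reduce to showing the sum over non-Hilbertian $n$ is $o(N)$, using a pointwise bound ${\rm{Ram}}_{E/\Qq(T)}(n)=O(\log n)$ (the paper's lemma \ref{lemma 1} gives the slightly sharper $O(\log n/\log\log n)$ via the discriminant of a defining polynomial) together with a power-saving count of exceptional specialization points (the paper cites $O(\sqrt{N})$ from Serre, you use $O(\sqrt{N}\log N)$; either suffices). No gaps.
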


Taking $k=1$ in Corollary \ref{coro 1} gives the following:
$$\frac{1}{N}\sum_{\substack{{0<n \leq N} \\  {\rm{Gal}}(E_n/\Qq)= G}} {\rm{Ram}}_{E/\mathbb{Q}(T)}(n) \, \, \sim \, \,  r \log \log (N) , \quad N\to \infty.$$
Hence, we reobtain that, given an integer $m \geq 1$, there exist integers $n \geq 1$ such that ${\rm{Gal}}(E_n/\Qq)=G$ and ${\rm{Ram}}_{E/\Qq(T)}(n) \geq m$. In particular, if a given non-trivial finite group $G$ occurs as the Galois group of a finite Galois extension of $\Qq(T)$ which is not trivial over $\overline{\Qq}$, then, {\it{given a positive integer $m$, there exists a finite Galois extension of $\Qq$ with Galois group $G$ and at least $m$ ramified prime numbers.}} We notice that, for some Galois groups over $\Qq$, the latter condition has not been proved yet. For example, there exist odd prime numbers $p$ for which all known realizations of ${\rm{PSL}}_2(\mathbb{F}_p)$ over $\Qq$ ramify only at $2$ and $p$ \cite{Zyw15}.

\subsubsection{Two corollaries on the function ${\rm{Ram}}_{E/\Qq(T)}$} \label{1.3}

From Theorem~\ref{BSL} with $k=2$, we get a normal order of the function ${\rm{Ram}}_{E/\Qq(T)}$:

\begin{corollary} \label{coro 2}
Let $\epsilon >0$. Then, for each positive integer $n$ which is not in some set $S_\epsilon$ which has asymptotic density zero, one has
$$(1-\epsilon) \cdot r \log  \log (n) \leq {\rm{Ram}}_{E/\Qq(T)} (n) \leq (1+\epsilon) \cdot r  \log  \log (n).$$
\end{corollary}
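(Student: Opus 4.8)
The plan is to deduce Corollary \ref{coro 2} from the $k=2$ case of Theorem \ref{BSL} by a standard Chebyshev/second-moment argument, exactly in the spirit of the Tur\'an proof of the Hardy--Ramanujan theorem. Fix $\epsilon > 0$. For $0 < n \le N$, say that $n$ is \emph{$\epsilon$-exceptional at level $N$} if
$$\left| {\rm{Ram}}_{E/\Qq(T)}(n) - r \log\log(N) \right| > \frac{\epsilon}{2} \cdot r \log\log(N).$$
By Theorem \ref{BSL} with $k = 2$ we have
$$\sum_{0 < n \le N} \big( {\rm{Ram}}_{E/\Qq(T)}(n) - r \log\log(N) \big)^2 = \big( r \log\log(N) + o(1) \big) N = O\big( N \log\log(N) \big),$$
so the number of $\epsilon$-exceptional $n \le N$ is at most
$$\frac{4}{\epsilon^2 (r \log\log N)^2} \sum_{0 < n \le N} \big( {\rm{Ram}}_{E/\Qq(T)}(n) - r \log\log(N) \big)^2 = O\!\left( \frac{N}{\log\log N} \right) = o(N).$$
This controls the exceptional set relative to the \emph{fixed} centering $r\log\log(N)$; the remaining work is the routine bookkeeping that lets one pass from the centering $\log\log(N)$ to the centering $\log\log(n)$ and collect everything into a single density-zero set $S_\epsilon$ independent of $N$.

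To do this I would use a dyadic decomposition. For each integer $j \ge 1$ let $N_j = 2^j$ and let $A_j$ denote the set of $n \in (N_{j-1}, N_j]$ that are $\epsilon'$-exceptional at level $N_j$, for a suitable $\epsilon' < \epsilon$ to be chosen (say $\epsilon' = \epsilon/3$); by the previous paragraph $|A_j| = o(N_j / \log\log N_j) = o(2^j / \log j)$ as $j \to \infty$. Set $S_\epsilon = \bigcup_{j} A_j$ together with the finitely many small $n$ for which the inequalities could fail for trivial reasons (including branch points, where ${\rm{Ram}}_{E/\Qq(T)}(n) = -1$). For $n \in (N_{j-1}, N_j]$ one has $\log\log(N_{j-1}) \le \log\log(n) \le \log\log(N_j)$, and since $\log\log(N_j) - \log\log(N_{j-1}) \to 0$ as $j \to \infty$, the quantities $\log\log(n)$, $\log\log(N_{j-1})$ and $\log\log(N_j)$ are all $(1 + o(1))$-comparable on this range. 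Hence for $n \notin S_\epsilon$ and $n$ large, membership outside $A_j$ gives $|{\rm{Ram}}_{E/\Qq(T)}(n) - r\log\log(N_j)| \le \epsilon' r \log\log(N_j)$, which after replacing $\log\log(N_j)$ by $\log\log(n)$ at the cost of absorbing the $o(1)$ error into the gap between $\epsilon'$ and $\epsilon$ yields
$$(1 - \epsilon)\, r \log\log(n) \le {\rm{Ram}}_{E/\Qq(T)}(n) \le (1 + \epsilon)\, r \log\log(n).$$

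It remains to check that $S_\epsilon$ has asymptotic density zero. Given $N$, pick $j$ with $N_{j-1} < N \le N_j$; then
$$|S_\epsilon \cap [1,N]| \le C_\epsilon + \sum_{i \le j} |A_i| = C_\epsilon + \sum_{i \le j} o\!\left( \frac{2^i}{\log i} \right),$$
and since $\sum_{i \le j} 2^i / \log i = O(2^j / \log j) = O(N / \log\log N)$ by the usual argument that the last term of a geometric-type sum dominates, we get $|S_\epsilon \cap [1,N]| = o(N)$. The only point requiring a little care — and the main (mild) obstacle — is the interplay of the three $\log\log$ scales across a dyadic block together with the choice of $\epsilon'$ versus $\epsilon$: one must make sure the $(1+o(1))$ discrepancy between $\log\log(n)$ and $\log\log(N_j)$ is eventually smaller than the slack $\epsilon - \epsilon'$, which holds because for $n$ large enough (say $n \ge n_0(\epsilon)$) we have $\log\log(N_j)/\log\log(n) \le 1 + (\epsilon - \epsilon')/(3r)$; absorbing the finitely many $n < n_0(\epsilon)$ into $S_\epsilon$ does not affect the density. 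This completes the proof.
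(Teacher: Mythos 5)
Your proof is correct and is essentially the paper's argument: both deduce the corollary from the $k=2$ case of Theorem \ref{BSL} via a Chebyshev/second-moment (Tur\'an-style) bound. The only difference is bookkeeping in passing from the centering $r\log\log(N)$ to $r\log\log(n)$ — you use dyadic blocks $(2^{j-1},2^j]$, while the paper restricts to $n\in(\sqrt{N},N]$ and absorbs the shift with $(A-B)^2\le 2A^2+2B^2$; both work equally well.
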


Consequently, the set of all positive integers $n$ such that ${\rm{Ram}}_{E/\Qq(T)} (n)$ $\leq C$ for a given non-negative integer $C$ has asymptotic density zero. The following corollary, which rests on Theorem \ref{BSL} with arbitrary $k$, gives upper bounds on the rate of convergence.

\begin{corollary} \label{coro 3}
Let $C$ and $k$ be two non-negative integers with $k \geq 1$. Then, there are some positive constants $\alpha(k,r)$ and $A(C,k,r)$ such that
$$\frac{1}{N} \bigg| \bigg \{0 < n \leq N \, : \, {\rm{Ram}}_{E/\Qq(T)}(n) \leq C \bigg \} \bigg| \leq \frac{\alpha(k,r)}{\log \log (N) ^k}$$
for each positive integer $N \geq A(C,k,r)$.
\end{corollary}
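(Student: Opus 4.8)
The plan is to derive Corollary~\ref{coro 3} from Theorem~\ref{BSL} by a higher-moment Chebyshev (Markov) inequality, using the $(2k)$-th power. First I would note that if $0<n\le N$ satisfies ${\rm{Ram}}_{E/\Qq(T)}(n)\le C$ --- which in particular covers the finitely many branch points, where the value is $-1\le C$ --- then, as soon as $r\log\log(N)>C$, one has
$$\bigl|\, {\rm{Ram}}_{E/\Qq(T)}(n)-r\log\log(N)\,\bigr|\ \ge\ r\log\log(N)-C,$$
and therefore
$$\left|\frac{{\rm{Ram}}_{E/\Qq(T)}(n)-r\log\log(N)}{\sqrt{r\log\log(N)}}\right|\ \ge\ \sqrt{r\log\log(N)}-\frac{C}{\sqrt{r\log\log(N)}}\ \ge\ \tfrac{1}{2}\sqrt{r\log\log(N)},$$
the last step being valid once $r\log\log(N)\ge 2C$, i.e. for all $N$ beyond some threshold $A_0(C,r)$.

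Raising to the $(2k)$-th power, summing over the relevant $n$, and then enlarging the sum to all $0<n\le N$ (legitimate since every summand is non-negative), the cardinality to be estimated is at most
$$\frac{2^{2k}}{\bigl(r\log\log(N)\bigr)^{k}}\ \sum_{0<n\le N}\left(\frac{{\rm{Ram}}_{E/\Qq(T)}(n)-r\log\log(N)}{\sqrt{r\log\log(N)}}\right)^{2k}.$$
Now I apply Theorem~\ref{BSL} with the \emph{even} exponent $2k$: the normalized sum $\frac{1}{N}\sum_{0<n\le N}(\cdots)^{2k}$ converges as $N\to\infty$ to the $(2k)$-th moment of the standard Gaussian, namely $M_{2k}=(2k)!/(2^{k}k!)$, and hence is $\le M_{2k}+1$ for all $N$ beyond some threshold $A_1(k,r)$. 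Combining with the previous display and dividing by $N$ yields
$$\frac{1}{N}\Bigl|\bigl\{0<n\le N:{\rm{Ram}}_{E/\Qq(T)}(n)\le C\bigr\}\Bigr|\ \le\ \frac{2^{2k}\,(M_{2k}+1)}{r^{k}}\cdot\frac{1}{\bigl(\log\log(N)\bigr)^{k}}$$
for every $N\ge A(C,k,r):=\max\{A_0(C,r),A_1(k,r),16\}$ (the last term just guaranteeing $\log\log(N)>0$), which is the claim with $\alpha(k,r)=2^{2k}(M_{2k}+1)/r^{k}$.

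I do not expect a genuine obstacle here: granted Theorem~\ref{BSL}, this is the standard ``tail bound from moments'' argument. The only points needing a little care are that the exponent passed to Theorem~\ref{BSL} must be even (so that the right-hand side is the finite Gaussian moment rather than $0$), and that the threshold $A(C,k,r)$ must simultaneously absorb the requirement $r\log\log(N)>2C$ and the (qualitative) rate of convergence in Theorem~\ref{BSL}; the constant $\alpha(k,r)$ is then completely explicit. One could alternatively invoke Theorem~\ref{BSL bis}, but working with the moments makes the polynomial decay in $\log\log(N)$ immediate.
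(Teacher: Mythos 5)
Your proof is correct and follows essentially the same route as the paper: the paper's own argument (via its Lemma \ref{lemma 3.3}) is exactly this higher-moment Chebyshev bound, applying Theorem \ref{BSL} with the even exponent $2k$ and thresholding at $|C - r\log\log(N)|$. Your handling of the branch points and of the threshold $r\log\log(N)\ge 2C$ is careful and the explicit constant $\alpha(k,r)=2^{2k}(M_{2k}+1)/r^{k}$ is of the same quality as the paper's.
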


\subsection{Summary of the proof of Theorem \ref{BSL}} \label{1.4}

The proof, given in \S\ref{4}, has two parts we summarize below. Let $P_E(T) \in \Zz[T]$ be a separable polynomial whose roots are the finite branch points of $E/\Qq(T)$.

First, given a positive integer $n$ which is not a branch point of the extension $E/\Qq(T)$, we relate the number ${\rm{Ram}}_{E/\mathbb{Q}(T)}(n)$ to the number $\omega(P_E(n))$ of distinct prime numbers dividing $P_E(n)$ (without multiplicity). Namely, we make the difference  $${\rm{Ram}}_{E/\mathbb{Q}(T)}(n) - \omega(P_E(n))$$
completely explicit up to $O(1)$ (Lemma \ref{lemma 5}). This step is based on the use of a classical result about ramification in specializations \cite{Bec91}, \cite{Con00}, \cite[\S3.2]{Leg17} (see Lemma \ref{lemma 3}) and of some generalized version of the arithmetic function $\omega$ (Definition \ref{def 4}).

Next, we study this prime divisor counting function (Lemma \ref{lemma 6}) and then show that the difference ${\rm{Ram}}_{E/\mathbb{Q}(T)}(n) - \omega (P_E(n))$ is negligible in our context. Namely, for each positive integer $k$, we show that
\begin{equation} \label{4.6 intro}
\sum_{0<n \leq N} \Big({\rm{Ram}}_{E/\Qq(T)}(n) - \omega(P_E(n)) \Big)^k \, \, {=} \, \, O(N)
\end{equation}
as $N$ tends to $\infty$ (Lemma \ref{lemma 7}). By a result of Halberstam \cite[Theorem 4]{Hal56}\footnote{which generalizes the so-called Erd\H{o}s-Kac theorem \cite{EK40} on the Gaussian behaviour of the number of prime divisors of an integer. See \cite{GS07} for a simple proof of the Erd\H{o}s-Kac theorem and a review of the literature on this result.}, one has 
\begin{equation} \label{Hal intro}
\lim\limits_{\substack{N \to \infty }} \frac{1}{N} \hspace{-1mm} \sum_{0<n \leq N} \hspace{-1mm} \bigg(\frac{\omega(P_E(n)) - r \log \log(N)}{\sqrt{r \log \log (N)}} \bigg)^k \hspace{-1mm} = \hspace{-0.5mm} \frac{1}{\sqrt{2 \pi}} \int_{- \infty}^{+\infty} t^k e^\frac{-t^2}{2} \, dt.
\end{equation}
Conjoining \eqref{4.6 intro} and \eqref{Hal intro} then provides Theorem \ref{BSL}.

\vspace{2mm}

{\bf{Acknowledgments.}} We wish to thank Pierre D\`ebes, Steve Lester, and Z\'eev Rudnick for helpful discussions and valuable comments.

\section{Preliminaries and notation} \label{2}

\subsection{Preliminaries} \label{2.1} Let $T$ be an indeterminate and $E/\Qq(T)$ a finite Galois extension, assumed not to be trivial over $\overline{\Qq}$.

A point $t_0 \in \mathbb{P}^1(\overline{\Qq})$ is a {\it{branch point}} of $E/\Qq(T)$ if the prime ideal $(T-t_0) \, \overline{\Qq}[T-t_0]$ \footnote{Replace $T-t_0$ by $1/T$ if $t_0 = \infty$.} ramifies in the integral closure of $\overline{\Qq}[T-t_0]$ in the compositum of $E$ and $\overline{\Qq}(T)$ (in a fixed algebraic closure of ${\Qq}(T)$). The extension $E/\Qq(T)$ has only finitely many branch points and their number is positive (actually at least 2); see Remark \ref{rem 1.2}.

Given a point $t_0 \in \mathbb{P}^1(\Qq)$ which is not a branch point, the residue field of a prime ideal $\mathcal{P}$ lying over $(T-t_0) \, {\Qq}[T-t_0]$ in the extension ${E}/\Qq(T)$ is denoted by ${E}_{t_0}$ and we call the extension ${E}_{t_0}/\Qq$ the {\it{specialization}} of ${E}/\Qq(T)$ at $t_0$. This does not depend on the choice of the prime ideal $\mathcal{P}$  lying over $(T-t_0) \, {\Qq}[T-t_0]$ since ${E}/\Qq(T)$ is Galois. The specialization $E_{t_0}/\Qq$ is a Galois extension of $\Qq$ whose Galois group is a subgroup of ${\rm{Gal}}(E/\Qq(T))$, namely the decomposition group of the extension ${E}/\Qq(T)$ at $\mathcal{P}$.

\subsection{Notation} The notation below will be used throughout the paper.

Let $T$ be an indeterminate and $E/\Qq(T)$ a finite Galois extension with Galois group $G$. Recall that the absolute Galois group of $\Qq$ acts on the branch points of the extension $E/\Qq(T)$ lying in $\overline{\Qq}$. Let $r \geq 1$ be the number of distinct orbits under this action and 
\begin{equation}
\{t_1, \dots, t_r\}
\label{def:bp}
\end{equation}
a set of representatives. For each $i \in \{1,\dots,r\}$, denote the ramification index of $(T-t_i) \overline{\Qq}[T-t_i]$ in $E\overline{\Qq}/\overline{\Qq}(T)$ by 
\begin{equation}
e_i
\label{def:ri}
\end{equation}
and let 
\begin{equation}
P_i(T) \in \Zz[T]
\label{def:pol}
\end{equation}
be the unique polynomial with positive leading coefficient $b_i$, which is irreducible over $\mathbb{Z}$, and which satisfies $P_i(t_i)=0$. Finally, set
\begin{equation}
P_E(T)=\prod_{i=1}^r P_i(T).
\label{def:P_E}
\end{equation}

Denote by $\omega(n)$ the number of distinct prime divisors (without multiplicity) of a given positive integer $n$.

\section{Proofs of Corollaries \ref{coro 1}, \ref{coro 2}, and \ref{coro 3} assuming Theorem~\ref{BSL}} \label{3}

\subsection{Proof of Corollary \ref{coro 1}} \label{3.1}

We need first the following elementary bound. The lemma below will be used again in the last part of the proof of Theorem \ref{BSL} (\S\ref{4.2}).

\begin{lemma} \label{lemma 1}
One has ${\rm{Ram}}_{E/\Qq(T)}(n)  = O(\log(n)/\log \log(n))$, $n \to \infty$.
\end{lemma}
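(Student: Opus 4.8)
The plan is to bound $\mathrm{Ram}_{E/\Qq(T)}(n)$ by showing that every prime ramifying in the specialization $E_n/\Qq$ must either divide a fixed integer depending only on $E/\Qq(T)$ (not on $n$), or else divide the integer $P_E(n)$ (equivalently, one of the $P_i(n)$). This reduces the problem to counting prime divisors of $P_E(n)$, for which the classical elementary estimate $\omega(m) = O(\log(m)/\log\log(m))$ is available.

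First I would invoke the standard description of ramification in specializations (the result quoted later in the paper as lemma \ref{lemma 3}, due to Beckmann and sharpened in \cite[\S2.2]{Leg14}): for all but finitely many primes $p$ — say, outside a finite set $\mathcal S_0$ depending only on $E/\Qq(T)$, consisting of the ``bad'' primes (those dividing the order of $G$, the leading coefficients $b_i$, discriminants, etc.) — a prime $p \notin \mathcal S_0$ ramifies in $E_n/\Qq$ only if $n$ is congruent to a branch point modulo $p$, i.e. only if $p \mid P_i(n)$ for some $i$, hence $p \mid P_E(n)$. Therefore
\[
\mathrm{Ram}_{E/\Qq(T)}(n) \;\le\; |\mathcal S_0| \;+\; \omega(P_E(n)).
\]
Since $|\mathcal S_0|$ is a constant, it suffices to estimate $\omega(P_E(n))$.

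Next I would carry out the routine analytic-number-theory estimate. The polynomial $P_E$ has some fixed degree $d \ge 1$ and fixed leading coefficient, so there is a constant $c>0$ with $|P_E(n)| \le n^d$ for all large $n$; moreover $P_E(n)\neq 0$ for $n$ large (it has finitely many roots), so $\omega(P_E(n))$ is defined for large $n$. Applying the classical bound $\omega(m) \ll \log m / \log\log m$ as $m\to\infty$ (which follows from the fact that the product of the first $j$ primes exceeds $2^j$, hence an integer $m$ with $\omega(m)=j$ satisfies $m \ge 2^j$, combined with prime-number-theorem-type control, or more elementarily from $\prod_{p\le x}p = e^{(1+o(1))x}$) gives
\[
\omega(P_E(n)) \;\le\; \omega\big(|P_E(n)|\big) \;=\; O\!\left(\frac{\log|P_E(n)|}{\log\log|P_E(n)|}\right) \;=\; O\!\left(\frac{\log(n^d)}{\log\log(n^d)}\right) \;=\; O\!\left(\frac{\log n}{\log\log n}\right),
\]
the last equality because $d$ is constant. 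Combining with the previous display yields $\mathrm{Ram}_{E/\Qq(T)}(n) = O(\log n/\log\log n)$, as desired.

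The only genuine input here is the ramification criterion (lemma \ref{lemma 3}); everything else is elementary, so I do not expect a real obstacle. The one point to be slightly careful about is making sure the exceptional set $\mathcal S_0$ is genuinely independent of $n$ — this is exactly what the cited ramification result provides — and that one is allowed to apply the $\omega$-bound to $|P_E(n)|$ rather than $P_E(n)$, which is immediate since $\omega$ ignores sign. (Branch points $n$, where $\mathrm{Ram}$ is set to $-1$, trivially satisfy the bound.)
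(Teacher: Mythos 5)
Your proof is correct, but it takes a different route from the paper's. You invoke the ramification criterion (lemma \ref{lemma 3}, i.e.\ the Specialization Inertia Theorem of Beckmann/Legrand) to conclude that every ramified prime outside a fixed finite set divides $P_E(n)$, and then apply the classical bound on $\omega$. The paper instead argues more elementarily: it picks a monic polynomial $P(T,Y)\in\Zz[T][Y]$ with splitting field $E$ over $\Qq(T)$ and discriminant $\Delta(T)\in\Zz[T]$, notes that $E_n$ is the splitting field of $P(n,Y)$ and that any prime ramifying in $E_n/\Qq$ must divide $\Delta(n)$, and then applies the same $\omega(m)=O(\log m/\log\log m)$ bound to $\Delta(n)$. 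The paper's version needs only the standard fact that ramified primes of a splitting field divide the discriminant of the defining polynomial, so it is self-contained and can be stated in \S3 before any of the specialization-inertia machinery is introduced; your version leans on the heavier lemma \ref{lemma 3} (which is proved independently of lemma \ref{lemma 1}, so there is no circularity, but it is more than is needed here). One cosmetic point: you write $|P_E(n)|\le n^d$, which should be $|P_E(n)|\le Cn^d$ for a constant $C$ depending on the coefficients; this does not affect the conclusion.
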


\begin{proof}
Let $P(T,Y) \in \mathbb{Z}[T][Y]$ be a monic separable (in $Y$) polynomial with splitting field $E$ over $\Qq(T)$ and $\Delta(T) \in \mathbb{Z}[T]$ its discriminant. For every integer $n$ which is not a root of $\Delta(T)$, $n$ is not branch point of $E/\Qq(T)$, the field $E_n$ is the splitting field over $\Qq$ of the polynomial $P(n,Y)$, and each prime number $p$ which ramifies in the extension $E_n/\Qq$ divides $\Delta(n)$. Hence, from the classical bound
$$\omega(n)=O(\log(n)/\log \log(n)), \quad n \to \infty$$ 
(see, e.g., \cite[$\S$V.15]{SMC06}) and as $\Delta(n)$ is polynomial in $n$, one gets
$${\rm{Ram}}_{E/\Qq(T)}(n) \leq \omega (\Delta(n)) =O(\log(n)/\log \log(n)), \quad n \to \infty,$$                          
as needed.
\end{proof}

\begin{proof} [Proof of Corollary \ref{coro 1}]
For any positive integers $k$ and $N$, set $$f_k(N)=\sum_{\substack{{0<n \leq N} \\  {\rm{Gal}}(E_n/\Qq) < G}} \bigg(\frac{{\rm{Ram}}_{E/\mathbb{Q}(T)}(n) - r \log \log(N)}{\sqrt{r \log \log (N)}} \bigg)^k.$$
By Theorem \ref{BSL}, it suffices to show $f_k(N)= o(N)$, $N \to \infty$, $k \geq 1$.

By Lemma \ref{lemma 1}, one has $f_k(N) = O(g(N) \cdot \log^k(N) \cdot (\log \log(N))^{-k})$, as $N$ tends to $\infty$, where $g(N)$ denotes the number of all positive integers $n \leq N$ such that ${\rm{Gal}}(E_n/\Qq) < G$. It then remains to use that 
$g(N)= O(\sqrt{N})$ as $N$ tends to $\infty$ (see, e.g., \cite[page 26]{Ser92}) to finish the proof.
\end{proof}

\subsection{Proof of Corollary \ref{coro 2}} \label{3.2}

Given a positive real number $\epsilon$, let $S_\epsilon$ be the set of all positive integers $n$ such that $$|{\rm{Ram}}_{E/\Qq(T)}(n) - r \log \log (n)| > \epsilon \cdot r \log \log (n).$$
Given a positive integer $N$, one has $$\frac{|\{0 <n \leq N \, : \, n \in S_\epsilon \}|}{N} \leq  \frac{1}{\sqrt{N}} + \frac{1}{N} \sum_{\substack{{\sqrt{N}<n \leq N} \\ n \in S_\epsilon}} 1.$$

\noindent
Then, to get Corollary \ref{coro 2}, it suffices to prove
\begin{equation} \label{Sepsilon0}
\frac{1}{N} \sum_{\substack{{\sqrt{N}<n \leq N} \\ n \in S_\epsilon}} 1 = o(1), \quad N \to \infty.
\end{equation}

By the definition of the set $S_\epsilon$, one has
\begin{equation} \label{Sepsilon}
\frac{1}{N} \sum_{\substack{{\sqrt{N}<n \leq N} \\ n \in S_\epsilon}} 1 < \frac{1}{N} \sum_{\sqrt{N}<n \leq N} \frac{({\rm{Ram}}_{E/\Qq(T)}(n) - r \log \log (n))^2}{\epsilon^2 \cdot (r \log \log(\sqrt{N}))^2}.
\end{equation}
As $(A-B)^2 \leq 2A^2+2B^2$ for any real numbers $A$ and $B$, we get
$$ ({\rm{Ram}}_{E/\Qq(T)}(n) - r \log \log (n))^2 \leq 2 \cdot ({\rm{Ram}}_{E/\Qq(T)}(n) - r \log \log (N))^2 $$
$$\hspace{21mm} + \, 2 r^2 \log^2(2)$$
for $\sqrt{N}<n \leq N$. Hence, the right-hand side in \eqref{Sepsilon} is smaller than
$$o(1) + \frac{2}{\epsilon^2 \cdot (r \log \log(\sqrt{N}))^2} \cdot \frac{1}{N} \sum_{0 <n \leq N} ({\rm{Ram}}_{E/\Qq(T)}(n) - r \log \log (N))^2.$$
By the case $k=2$ in Theorem \ref{BSL}, one has
$$\frac{1}{N} \sum_{0 <n \leq N} ({\rm{Ram}}_{E/\Qq(T)}(n) - r \log \log (N))^2 \sim r\log \log(N), \quad N \to \infty.$$
Hence, \eqref{Sepsilon0} holds and Corollary \ref{coro 2} follows. $\hfill \square$

\subsection{Proof of Corollary \ref{coro 3}} \label{3.3}

We shall need Lemma \ref{lemma 3.3} below whose proof is almost identical to the proof of Corollary \ref{coro 2}. The difference is that one applies Theorem \ref{BSL} with an arbitrary even integer $k$, in contrast to $k=2$.

Set
$$I_k=\frac{1}{\sqrt{2 \pi}} \int_{- \infty}^{+\infty} t^{2k} e^\frac{-t^2}{2} \, dt$$
for each positive integer $k$.

\begin{lemma} \label{lemma 3.3}
Let $k$ be a positive integer. Then, there exists some positive constant $A(k)$ such that
$$\frac{ | \{0 < n \leq N : |{\rm{Ram}}_{E/\Qq(T)} (n) - r \log \log (N) | \geq C \}|}{N} \leq \frac{2I_k \cdot (r \log \log (N))^k}{C^{2k}}$$
for each positive integer $N \geq A(k)$ and every positive real number $C$.
\end{lemma}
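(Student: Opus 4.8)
The plan is to mimic the proof of corollary \ref{coro 2} but to keep track of the even moment of order $2k$ rather than just the second moment, and to be careful about which quantity the normalization is centered at. Fix a positive integer $k$. We want to bound, for $N$ large, the density of integers $0<n\le N$ for which $|{\rm{Ram}}_{E/\Qq(T)}(n)-r\log\log(N)|\ge C$. Since $2k$ is even, for any such $n$ we have $\big({\rm{Ram}}_{E/\Qq(T)}(n)-r\log\log(N)\big)^{2k}\ge C^{2k}$, so
$$\big|\{0<n\le N:|{\rm{Ram}}_{E/\Qq(T)}(n)-r\log\log(N)|\ge C\}\big|\;\le\;\frac{1}{C^{2k}}\sum_{0<n\le N}\big({\rm{Ram}}_{E/\Qq(T)}(n)-r\log\log(N)\big)^{2k}.$$
(This is just Chebyshev/Markov applied to the $2k$-th power, and the even exponent is what lets us drop the absolute value and sum over \emph{all} $n\le N$, including the branch points, at the cost of an $O(1)$ term absorbed later.)

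Next I would invoke theorem \ref{BSL} with the even integer $2k$ in place of $k$: it gives
$$\frac{1}{N}\sum_{0<n\le N}\bigg(\frac{{\rm{Ram}}_{E/\mathbb{Q}(T)}(n)-r\log\log(N)}{\sqrt{r\log\log(N)}}\bigg)^{2k}\;\longrightarrow\;\frac{1}{\sqrt{2\pi}}\int_{-\infty}^{+\infty}t^{2k}e^{-t^2/2}\,dt=I_k,\qquad N\to\infty.$$
Hence there is a threshold $A(k)$ such that for all $N\ge A(k)$ the left-hand side is at most $2I_k$ (any constant strictly greater than $I_k$ would do; the factor $2$ is convenient and matches the statement). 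Multiplying through by $(r\log\log(N))^{k}$ — which is the $k$-th power of the variance normalization $r\log\log(N)$, since the exponent is $2k$ — gives
$$\frac{1}{N}\sum_{0<n\le N}\big({\rm{Ram}}_{E/\Qq(T)}(n)-r\log\log(N)\big)^{2k}\;\le\;2I_k\,(r\log\log(N))^{k}$$
for $N\ge A(k)$. Combining this with the Chebyshev bound from the previous paragraph yields exactly the claimed inequality
$$\frac{|\{0<n\le N:|{\rm{Ram}}_{E/\Qq(T)}(n)-r\log\log(N)|\ge C\}|}{N}\le\frac{2I_k\,(r\log\log(N))^{k}}{C^{2k}}$$
for every $N\ge A(k)$ and every real $C>0$.

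There is really no serious obstacle here: the lemma is a soft consequence of theorem \ref{BSL} (applied at an even moment) together with Markov's inequality, and the only points requiring a little care are bookkeeping ones. First, one must use an \emph{even} moment so that raising the centered quantity to the $2k$-th power is monotone in its absolute value, allowing the absolute value in the event to be removed; this is exactly the stated contrast with the $k=2$ case used for corollary \ref{coro 2}. Second, one must extract the threshold $A(k)$ from the convergence in theorem \ref{BSL}: this is where $A(k)$ comes from, and it depends on $k$ (and, implicitly, on $r$, which is fixed by $E/\Qq(T)$). The resulting lemma then feeds into the proof of corollary \ref{coro 3} by taking $C$ slightly below $r\log\log(N)$ so that the event $\{{\rm{Ram}}_{E/\Qq(T)}(n)\le C'\}$ for fixed $C'$ is contained in $\{|{\rm{Ram}}_{E/\Qq(T)}(n)-r\log\log(N)|\ge C\}$ once $N$ is large.
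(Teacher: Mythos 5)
Your proposal is correct and follows essentially the same route as the paper's own proof: a Chebyshev/Markov bound on the $2k$-th centered power, followed by the $2k$-th moment asymptotic from theorem \ref{BSL}, with $A(k)$ extracted so that the $I_k+o(1)$ factor is at most $2I_k$. No gaps.
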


\begin{proof}
Given a positive integer $N$ and a positive real number $C$, let $S_{N,C}$ be the set of all integers $n \geq 1$ such that $$|{\rm{Ram}}_{E/\Qq(T)} (n) - r \log \log (N) | \geq C.$$ 
One has
$$\frac{1}{N} \sum_{\substack{{0<n \leq N} \\ n \in S_{N,C}}} 1 \leq  \frac{1}{N} \cdot \frac{1}{C^{2k}} \sum_{0 <n \leq N} ({\rm{Ram}}_{E/\Qq(T)} (n) - r \log \log (N))^{2k}.$$
By using the $2k$-th moment given in Theorem \ref{BSL}, we get 
$$\frac{1}{N} \cdot \frac{1}{C^{2k}} \sum_{0 <n \leq N} ({\rm{Ram}}_{E/\Qq(T)} (n) - r \log \log (N))^{2k} = \frac{(r \log \log (N))^k}{C^{2k}} \cdot (I_k + o(1))$$
where the $o(1)$ depends only on $k$, thus ending the proof.
\end{proof}

\begin{proof}[Proof of Corollary \ref{coro 3}]
Given a positive integer $N$, denote by $f(N)$ the number of positive integers $n \leq N$ such that $${\rm{Ram}}_{E/\Qq(T)}(n) \leq C$$ 
and by $g(N)$ the number of positive integers $n \leq N$ such that 
$$|{\rm{Ram}}_{E/\Qq(T)}(n) - r \log \log (N)| \geq |C - r \log \log (N)|.$$
If $N$ is sufficiently large (depending on $k$, $C$, and $r$), then, by Lemma \ref{lemma 3.3}, one has 
$$f(N) \leq g(N) \leq N \cdot (r \log \log (N))^k \cdot \frac{2 I_k}{(C-r \log \log(N))^{2k}},$$
as needed.
\end{proof}

\section{Proof of Theorem \ref{BSL}} \label{4}

\subsection{Proof of Theorem \ref{BSL} under an extra assumption} \label{4.1}
In this section, we prove:

\begin{proposition} \label{BSL cond}
Assume that the following condition holds:

\begin{enumerate}[label=\hspace{-1.5mm} {\rm{($*$)}}, leftmargin=*]
\item\label{star} $P_i(n) >0$ for each $i \in \{1,\dots,r\}$ and each $n \geq 1$, where the $P_i(T)$'s are
\end{enumerate}

\noindent
defined in \eqref{def:pol}.

\vspace{0.5mm}

\noindent
Then, Theorem \ref{BSL} holds.
\end{proposition}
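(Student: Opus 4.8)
The plan is to follow the two-step strategy sketched in \S\ref{1.4}, under the simplifying assumption \ref{star} which guarantees that the quantities $P_i(n)$ are genuine positive integers for all $n\geq 1$, so that $\omega(P_E(n))$ makes sense without sign or zero issues. First I would invoke the classical ramification criterion for specializations (lemma \ref{lemma 3} in the paper, due to Beckmann and as in \cite[\S2.2]{Leg14}): for a positive integer $n$ which is not a branch point, a prime number $p$ that is suitably large relative to $E/\Qq(T)$ ramifies in $E_n/\Qq$ if and only if $p$ divides $P_E(n)$ but $p^{e_i} \nmid$ the relevant factor, the inertia being controlled by the $e_i$'s. This should let me write, using the generalized prime-counting function of definition \ref{def 4}, the difference ${\rm{Ram}}_{E/\Qq(T)}(n) - \omega(P_E(n))$ as something completely explicit up to $O(1)$ (this is lemma \ref{lemma 5}); the finitely many small "bad" primes contribute only $O(1)$, and the generalized $\omega$ accounts for the discrepancy between "$p$ divides $P_E(n)$" and "$p$ ramifies".

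Second, I would control that difference in $k$-th power mean. The point is to show \eqref{4.6 intro}, namely $\sum_{0<n\leq N}\big({\rm{Ram}}_{E/\Qq(T)}(n)-\omega(P_E(n))\big)^k = O(N)$. Here lemma \ref{lemma 1} gives the crude pointwise bound ${\rm{Ram}}_{E/\Qq(T)}(n)=O(\log n/\log\log n)$, and the same bound holds for $\omega(P_E(n))$ since $P_E(n)$ is polynomial in $n$; so the difference is $O(\log n/\log\log n)$ pointwise. That alone is not enough, so I would study the generalized $\omega$ from lemma \ref{lemma 5} more carefully (lemma \ref{lemma 6}): the extra terms count, roughly, primes $p$ with $p^{e_i}\mid P_i(n)$, i.e. integers $n$ in a bounded union of residue classes mod $p^{e_i}$; summing over $p$ and $n\leq N$ gives a convergent sum (since $e_i\geq 1$, and for the truly problematic "multiplicity" contributions $e_i\geq 2$ forces convergence of $\sum 1/p^{e_i}$), yielding $O(N)$ after raising to the $k$-th power via the pointwise bound on one factor and $L^1$ control on the rest (lemma \ref{lemma 7}). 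Then Halberstam's theorem \cite[theorem 4]{Hal56}, quoted as \eqref{Hal intro} — applicable precisely because $P_E=\prod P_i$ has exactly $r$ irreducible factors over $\Zz$, matching the number of Galois orbits of finite branch points — gives the CLT for $\omega(P_E(n))$ with normalization $r\log\log N$.

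Finally I would combine the two: expanding $\big(({\rm{Ram}}-r\log\log N) = (\omega(P_E(n))-r\log\log N) + ({\rm{Ram}}-\omega(P_E(n)))\big)^k$ by the binomial theorem, dividing by $N(r\log\log N)^{k/2}$, and using that the cross terms involving $j\geq 1$ copies of the difference are bounded, via Cauchy--Schwarz (or Hölder) together with \eqref{4.6 intro} and the finiteness of the moments of $\omega(P_E(n))$, by $O\!\big((\log\log N)^{-1/2}\big)\to 0$, shows that the $k$-th moment of the normalized ${\rm{Ram}}_{E/\Qq(T)}$ has the same limit as that of the normalized $\omega(P_E(n))$, namely the $k$-th Gaussian moment. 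This proves theorem \ref{BSL} under \ref{star}, which is the content of proposition \ref{BSL cond}. I expect the main obstacle to be the bookkeeping in the second step: making the generalized $\omega$ of definition \ref{def 4} and lemma \ref{lemma 5} precise enough that the error sum is provably $O(N)$ and not merely $o(N\log\log N)$, in particular isolating which contributions have $e_i\geq 2$ (hence a convergent prime sum) versus which are absorbed into the $O(1)$ of finitely many ramified-in-$E/\Qq(T)$-independent primes; by contrast, the reduction to \ref{star} for the general case and the final binomial/Hölder combination are comparatively routine.
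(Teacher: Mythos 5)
Your overall architecture is the paper's: approximate ${\rm{Ram}}_{E/\Qq(T)}(n)$ by $\omega(P_E(n))$ up to a correction term built from the generalized counting functions $m_{e_i}$ of definition \ref{def 4}, show that correction has all moments $O(N)$, apply Halberstam's theorem to $\omega(P_E(n))$, and transfer the moments by a binomial/H\"older expansion; that last transfer step is described correctly and is exactly what the paper does. Two imprecisions in your first step should be flagged, though they are not fatal since you defer to lemma \ref{lemma 3}: the ramification criterion is that $e_i\nmid v_p(P_i(n))$ (a divisibility condition on the $p$-adic \emph{valuation}), not that $p^{e_i}\nmid P_i(n)$ --- this is precisely why $m_{e_i}$, which counts the primes $p$ with $e_i\mid v_p$, is the correct correction term in lemma \ref{lemma 5}; and every $e_i$ is automatically $\geq 2$ because $t_i$ is a branch point, which is what saves the whole argument (for $a=1$ one has $m_1=\omega$ and lemma \ref{lemma 6} is false), so your hedging about ``$e_i\geq 1$'' should be replaced by this observation.

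The genuine gap is in your justification of the moment bound \eqref{4.6 intro}. You propose to deduce $\sum_{0<n\leq N} m_{e_i}^k(P_i(n))=O(N)$ from the first-moment bound together with ``the pointwise bound on one factor and $L^1$ control on the rest''. The only available pointwise bound is $m_{e_i}(P_i(n))=O(\log n/\log\log n)$, so this yields at best $O\big(N(\log N/\log\log N)^{k-1}\big)$, not $O(N)$, and that is too weak for your own final step: after H\"older, the binomial term with $j\geq 1$ copies of the difference $D(n)={\rm{Ram}}_{E/\Qq(T)}(n)-\omega(P_E(n))$ is $O\big((\log\log N)^{-j/2}\big)\cdot\big(\tfrac1N\sum_{n\leq N}|D(n)|^k\big)^{j/k}$, which tends to $0$ only if $\tfrac1N\sum_{n\leq N}|D(n)|^k=o\big((\log\log N)^{k/2}\big)$; a bound growing like a power of $\log N$ destroys the limit. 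The missing idea is the paper's proof of lemma \ref{lemma 6}: since $e_i\geq 2$, each counted prime satisfies $p^2\mid P_i(n)$, so one expands the $k$-th power into a sum over $k$-tuples $(p_1,\dots,p_k)$ with $p_j^2\mid P_i(n)$ for all $j$, counts the admissible $n\leq N$ by $\nu(\Pi^2)N/\Pi^2$ where $\Pi$ is the product of the distinct $p_j$'s and $\nu(M)$ is the (multiplicative) number of roots of $P_i$ modulo $M$, and concludes from the convergence of $\sum_p\nu(p^2)/p^2$ (using $\nu(p^2)\leq\deg P_i$ for all but finitely many $p$) that the whole $k$-fold sum is $N\cdot O(1)$. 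Some such second-moment-method computation is needed to make the middle step work.
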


We break the proof of Proposition \ref{BSL cond} into three parts.

\subsubsection{Approximation of ${\rm{Ram}}_{E/\Qq(T)}$ by prime divisor counting functions} \label{4.1.1}

Below, we describe the function ${\rm{Ram}}_{E/\Qq(T)}$ in terms of several prime divisor counting functions (Lemma \ref{lemma 5}).

First, we need the following lemma which summarizes our use of the classical result about ramification in specializations alluded to in \S\ref{1.4}. 

Given a prime number $p$, let $v_p$ be the $p$-adic valuation over $\Qq$ and $\Zz_{(p)}$ the localization of $\Zz$ at the prime ideal generated by $p$.

\begin{lemma} \label{lemma 3}
For each sufficiently large prime number $p$ (depending on the extension $E/\Qq(T)$) and each positive integer $n$ which is not a branch point of $E/\Qq(T)$, the following two conditions are equivalent:

\begin{enumerate}[label=\hspace{-3mm} {\rm{({\rm{\alph*}})}}, leftmargin=*]

\item\label{(a)} $p$ ramifies in the specialization $E_n/\Qq$ of $E/\Qq(T)$ at $n$,

\vspace{0.5mm}

\item\label{(b)} there is a unique index $i \in \{1,\dots,r\}$ such that $v_p(P_i(n)) > 0$ and
\end{enumerate}

\noindent
$e_i {\not \vert} v_p(P_i(n))$, where the $e_i$'s and the $P_i(T)$'s are defined in \eqref{def:ri} and \eqref{def:pol}.

\end{lemma}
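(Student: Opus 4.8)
\textbf{Proof proposal for Lemma \ref{lemma 3}.}

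The plan is to reduce the statement to the standard criterion for ramification in specializations, as found in \cite{Bec91} and \cite[\S2.2]{Leg14}. That criterion says: for all but finitely many primes $p$ (the exceptional ones being those dividing the discriminant of a chosen model, the leading coefficients $b_i$, the ramification indices $e_i$, or $|G|$, plus finitely many others coming from the geometry of the cover), and for every $t_0 \in \Qq$ that is not a branch point and is $p$-integral, the prime $p$ ramifies in $E_{t_0}/\Qq$ \emph{if and only if} the reduction $\overline{t_0}$ modulo $p$ coincides with the reduction $\overline{t_i}$ of some branch point $t_i$, and moreover the $p$-adic valuation of the ``distance'' from $t_0$ to that branch point is not divisible by the corresponding ramification index $e_i$. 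So the first step is to quote this result precisely and fix, once and for all, the finite set of bad primes to be excluded.

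Next I would translate the geometric condition ``$\overline{t_0} = \overline{t_i}$ in $\overline{\Ff_p}$ and $e_i \nmid v$'' into the arithmetic condition involving $P_i(n)$ stated in \ref{(b)}. Here I would use that the $t_i$ are a set of Galois orbit representatives of the finite branch points and that $P_i(T) \in \Zz[T]$ is the irreducible integer polynomial (with leading coefficient $b_i$) vanishing at $t_i$; thus the Galois conjugates of $t_i$ are exactly the roots of $P_i$. For a prime $p$ not dividing $b_i$ (which holds since $p$ is large), the reduction of $t_0 = n$ agrees with the reduction of \emph{some} conjugate of $t_i$ precisely when $p \mid P_i(n)$, i.e. $v_p(P_i(n)) > 0$; and in that case $v_p(P_i(n))$ equals $e_i$ times the ramification contribution coming from that branch point, up to a unit — more precisely $v_p(P_i(n)) = \sum_{\sigma} v_p(n - \sigma(t_i))$ over the conjugates $\sigma(t_i)$ reducing to $\overline{n}$, and for $p$ large only one such conjugate occurs and contributes, so $v_p(P_i(n)) = v_p(n - t_i')$ for the relevant conjugate $t_i'$. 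Then $e_i \nmid v_p(P_i(n))$ is exactly the non-tameness-of-the-distance condition from the ramification criterion. The uniqueness of the index $i$ comes from the fact that distinct branch points (in distinct orbits, and also distinct conjugates within an orbit) have distinct reductions modulo any sufficiently large $p$, so $\overline{n}$ can hit at most one branch point mod $p$; equivalently, for $p$ large, $p$ divides at most one of $P_1(n), \dots, P_r(n)$.

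The main obstacle, and the place where care is needed, is controlling precisely which primes must be thrown away and making sure the bound on the bad set depends only on $E/\Qq(T)$ and not on $n$. The delicate points are: (i) ensuring distinct branch points stay distinct mod $p$ uniformly — this requires $p$ to exceed the primes dividing the resultants $\mathrm{Res}(P_i, P_j)$ for $i \ne j$ and the discriminants of the $P_i$, a finite condition; (ii) ensuring that when $p \mid P_i(n)$ only a single Galois conjugate of $t_i$ is responsible, so that the valuation identity $v_p(P_i(n)) = v_p(n - t_i')$ holds cleanly — again a finite set of exclusions; and (iii) handling the leading coefficients $b_i$ so that $v_p(P_i(n)) = \sum_\sigma v_p(n - \sigma(t_i))$ with no spurious contribution from $b_i$, which holds as soon as $p \nmid b_i$. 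Once these finitely many primes are excluded, the equivalence of \ref{(a)} and \ref{(b)} follows directly from the quoted criterion, and I would close by noting that the branch point at $\infty$ (if present) plays no role here since $n$ ranges over positive integers and $\infty$ is never hit.
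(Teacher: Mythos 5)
Your proposal is correct and follows essentially the same route as the paper: both quote the Specialization Inertia Theorem of \cite{Bec91}, \cite[\S2.2]{Leg14}, translate the ``$n$ meets a branch point modulo $p$'' condition into $v_p(P_i(n))>0$ with $e_i \nmid v_p(P_i(n))$ after excluding primes dividing the $b_i$ and those for which distinct branch points could collide modulo $p$, and derive uniqueness of $i$ from the fact that for good primes two non-conjugate branch points cannot meet modulo $p$.
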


\begin{proof}
For each $i \in \{1,\dots,r\}$, let $m_i(T)$ be the irreducible polynomial of $t_i$ over $\Qq$, where the $t_i$'s are defined in \eqref{def:bp}. So $P_i(T)= b_i \cdot m_i(T)$ for each index $i \in \{1,\dots,r\}$.

Below, we use the notion of meeting modulo a prime number $p$. Recall that $t$ and $t'$ in $\mathbb{P}^1(\overline{\mathbb{Q}})$ {\it{meet modulo $p$}} if there exist a number field $F$ such that $t, t' \in \mathbb{P}^1(F)$ and a valuation $v$ of $F$ lying over $v_p$ such that either $v(t) \geq 0$, $v(t') \geq 0$, and $v(t-t')>0$ or $v(1/t) \geq 0$, $v(1/t') \geq 0$, and $v((1/t)-(1/t'))>0$.

Pick a positive real number $p_0$ such that every prime number $p > p_0$ satisfies the following three conditions:

\begin{enumerate}[label=(\roman*), leftmargin=+0.8cm]

\item\label{(ii)}
$p$ does not divide $b_1 \cdots b_r$,

\item\label{(iii)}
$t_i$ and $1/{t_i}$ are integral over $\mathbb{Z}_{(p)}$ for each index $i \in \{1,\dots,r\}$ \footnote{Condition \ref{(ii)} implies that $t_1, \dots, t_r$ are integral over $\mathbb{Z}_{(p)}$.},

\item\label{(i)}
$p$ is a {\it{good prime}} in the sense of \cite[Definition 3.4]{Leg17}
\end{enumerate}

\noindent
(in particular, two distinct branch points cannot meet modulo $p$).

\noindent
Fix a prime $p > p_0$ and an integer $n \geq 1$ which is not a branch point. From condition \ref{(ii)}, one has $v_p(P_i(n)) = v_p(m_i(n))$, $i \in \{1,\dots,r\}$.

First, assume that condition \hspace{0.05mm} \ref{(b)} holds for some $i \in \{1,\dots,r\}$. Then, $v_p(m_i(n)) >0$. By the first part of \cite[Lemma 2.5]{Leg16}, the integer $n$ meets the branch point $t_i$ modulo $p$. From part (2)(a) of the {\it{Specialization Inertia Theorem}} \cite[\S3.2]{Leg17}, conditions \ref{(iii)} and \ref{(i)}, and since $v_p(m_i(n))$ is not a multiple of $e_i$, the prime number $p$ ramifies in the specialization $E_n/\Qq$ of $E/\Qq(T)$ at $n$, as needed for \hspace{0.5mm} \ref{(a)}.

Conversely, assume that $p$ ramifies in $E_n/\Qq$. From part (1) of the Specialization Inertia Theorem and condition \ref{(i)}, $n$ meets some branch point (different from $\infty$) modulo $p$. By the definition of the set $\{t_1,\dots,t_r\}$ and by the second part of \cite[Remark 2.3]{Leg16}, there is an $i \in \{1,\dots,r\}$ such that $n$ and $t_i$ meet modulo $p$. As $p$ satisfies condition \ref{(iii)}, one may apply the second part of \cite[Lemma 2.5]{Leg16} to get $v_p(P_i(n))>0$. Since $n$ meets $t_i$ modulo $p$ and $p$ satisfies conditions \ref{(iii)} and \ref{(i)}, one may apply part (2)(a) of the Specialization Inertia Theorem to get that the ramification index of each prime ideal lying over $p$ in $E_n/\Qq$ is equal to $e':=e_i / {\rm{gcd}}(e_i, v_p(P_i(n)))$. As $p$ ramifies in $E_n/\Qq$, one has $e' > 1$, i.e., $v_p(P_i(n))$ is not a multiple of $e_i$. 

It then remains to prove that an $i$ as above is unique. Assume that condition \hspace{0.55mm} \ref{(b)} holds for two indices $i \not=j \in \{1,\dots,r\}$. In particular, one has $v_p(m_i(n))>0$ and $v_p(m_j(n))>0$. By the first part of \cite[Lemma 2.5]{Leg16}, $n$ meets the two branch points $t_i$ and $t_j$ modulo $p$. Hence, there is a $\sigma$ in the absolute Galois group of $\Qq$ such that the branch points $t_i$ and $\sigma(t_j)$ meet modulo $p$. As $p$ satisfies condition \ref{(i)}, one has $t_i=\sigma(t_j)$, which contradicts the definition of the set $\{t_1,\dots,t_r\}$.
\end{proof}

Lemma \ref{lemma 3} motivates the following definition:

\begin{definition} \label{def 4}
Given two positive integers $a$ and $n$, set
$$m_a(n) = |\{ p \, : \, v_p(n) > 0 \, \, {\rm{and}} \, \, a \vert v_p(n) \}|.$$
\end{definition}

\noindent
In the special case $a=1$, we retrieve the classical function $\omega$, i.e., $\omega(n) = m_1(n)$ for each positive integer $n$.

\vspace{2mm}

In terms of Definition \ref{def 4}, Lemma \ref{lemma 3} provides the following approximation of ${\rm{Ram}}_{E/\Qq(T)}$.

\begin{lemma} \label{lemma 5}
There exists some real number $C \geq 1$ such that
$$\Big \vert {\rm{Ram}}_{E/\Qq(T)}(n) - \omega(P_E(n)) + \sum_{i=1}^r m_{e_i}(P_i(n)) \Big \vert \leq C$$
for each positive integer $n$ which is not a branch point, where the polynomial $P_E(T)$ is defined in \eqref{def:P_E}.
\end{lemma}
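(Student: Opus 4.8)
The plan is to derive Lemma \ref{lemma 5} directly from Lemma \ref{lemma 3} by counting, for a non-branch-point integer $n$, the primes $p$ that ramify in $E_n/\Qq$. First I would split the set of primes into the finitely many "small" primes $p \leq p_0$ (those excluded by Lemma \ref{lemma 3}) and the "large" primes $p > p_0$. The contribution of the small primes to each of the three quantities ${\rm{Ram}}_{E/\Qq(T)}(n)$, $\omega(P_E(n))$ and $\sum_i m_{e_i}(P_i(n))$ is bounded by $p_0$ uniformly in $n$ (there are at most $p_0$ such primes in total), so these can be absorbed into the constant $C$. Thus it suffices to prove the identity
$$
\sum_{p > p_0,\ p \text{ ramifies in } E_n/\Qq} 1 \;=\; \sum_{p > p_0} \mathbf{1}\{p \mid P_E(n)\} \;-\; \sum_{i=1}^r \sum_{p > p_0} \mathbf{1}\{p \mid P_i(n),\ e_i \mid v_p(P_i(n))\}
$$
up to $O(1)$, where in fact I expect exact equality once the small primes are removed.

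The key step is a pointwise count: fix a large prime $p > p_0$ and ask how it contributes to each side. By Lemma \ref{lemma 3}, $p$ ramifies in $E_n/\Qq$ if and only if there is a \emph{unique} index $i$ with $v_p(P_i(n)) > 0$ and $e_i \nmid v_p(P_i(n))$. On the right-hand side, $\omega(P_E(n))$ counts (for large $p$) the primes dividing $P_E(n) = \prod_i P_i(n)$; but I must be careful, because $p$ could divide two of the $P_i(n)$'s. This is exactly where the hypotheses behind Lemma \ref{lemma 3} pay off: condition \ref{(i)} of its proof (two distinct branch points do not meet modulo a good prime) forces that, for $p > p_0$, at most one index $i$ satisfies $p \mid P_i(n)$. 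Hence for large $p$, $p \mid P_E(n)$ iff $p \mid P_i(n)$ for a unique $i$, and for that same $i$ the term $\sum_{p>p_0}\mathbf{1}\{p\mid P_i(n),\ e_i\mid v_p(P_i(n))\}$ subtracts exactly the cases where $e_i \mid v_p(P_i(n))$. So the right-hand side counts precisely the large primes $p$ dividing a unique $P_i(n)$ with $e_i \nmid v_p(P_i(n))$ — which by Lemma \ref{lemma 3} is exactly the set of large primes ramifying in $E_n/\Qq$.

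Assembling: the large-prime parts of the three terms agree exactly, the small-prime parts are each $O(1)$, and the finitely many branch points are excluded by hypothesis, so the total discrepancy is bounded by a constant $C \geq 1$ independent of $n$. I expect the main technical obstacle to be the uniqueness bookkeeping — making fully rigorous that no large prime divides two of the $P_i(n)$ simultaneously (reducing from "branch points meet" to "$p$ divides two $m_i(n)$'s", using the second part of \cite[lemma 2.5]{Leg13a} as in the proof of Lemma \ref{lemma 3}), and cleanly handling the relation $v_p(P_i(n)) = v_p(m_i(n))$ coming from condition \ref{(ii)}. Once that is in place, the rest is just separating finitely many primes into the constant term.
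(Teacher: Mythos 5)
Your proposal is correct and follows essentially the same route as the paper: both arguments rest on Lemma \ref{lemma 3}, absorb the finitely many small primes into the constant, and use the fact that a good prime cannot divide two distinct $P_i(n)$'s (since distinct representatives $t_i, t_j$ cannot meet modulo such a prime) to identify $\omega(P_E(n))$ with $\sum_i \omega(P_i(n))$ up to $O(1)$. The only difference is organizational — the paper first proves the bound with $\sum_i \omega(P_i(n))$ in place of $\omega(P_E(n))$ and then compares the two, while you handle both comparisons in one pointwise count over large primes.
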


\noindent
As condition \ref{star} from Proposition \ref{BSL cond} holds, the integers $\omega(P_E(n))$ and $m_{e_i}(P_i(n))$, $1 \leq i \leq r$ and $n >0$, are well-defined.

\begin{proof}
By Lemma \ref{lemma 3}, there exists some real number $C \geq 1$ such that
\begin{equation} \label{4.4 1}
\Big \vert {\rm{Ram}}_{E/\Qq(T)}(n) - \sum_{i=1}^r\omega(P_i(n)) + \sum_{i=1}^r m_{e_i}(P_i(n)) \Big \vert \leq C
\end{equation}
for each positive integer $n$ which is not a branch point. 

Let $n$ be a positive integer which is not a branch point, $i \not=j \in \{1,\dots,r\}$, and $p$ a common prime divisor of $P_i(n)$ and $P_j(n)$. Assume that $p$ satisfies both conditions \ref{(ii)} and \ref{(i)} from the proof of Lemma \ref{lemma 3}. Then, one has $v_p(P_i(n)/b_i) > 0$ and $v_p(P_j(n)/b_j) > 0$. As explained in the last paragraph of the proof of Lemma \ref{lemma 3}, this provides that the branch points $t_i$ and $t_j$ are conjugate over $\Qq$, which cannot happen by the definition of the set $\{t_1,\dots,t_r\}$. Hence, there exists some positive real number $C'$ (not depending on $n$) such that 
\begin{equation} \label{4.4 2}
\Big|\omega(P_E(n)) -\sum_{i=1}^r \omega(P_i(n)) \Big| \leq C'.
\end{equation} 
It then remains to combine \eqref{4.4 1} and \eqref{4.4 2} to finish the proof.
\end{proof}

\subsubsection{Estimating moments} \label{4.1.2}

Let us start by estimating the moments of the functions $m_a$, $a \geq 2$.

\begin{lemma} \label{lemma 6}
Let $a$ and $k$ be two positive integers such that $a \geq 2$ and let $P(T) \in \mathbb{Z}[T]$ be a separable polynomial satisfying $P(n) >0$ for each positive integer $n$. Then, there exists some positive real number $C(P,k)$ such that $$\sum_{0<n \leq N} m_{a}^k (P(n)) \leq C(P,k) \cdot N$$
for each positive integer $N$.
\end{lemma}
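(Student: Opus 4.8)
## Proof proposal for Lemma \ref{lemma 6}

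The plan is to bound $m_a(P(n))$ by the number of prime powers $p^a$ dividing $P(n)$ and then raise to the $k$-th power and sum, exploiting the fact that requiring a prime $p$ with $p^a \mid P(n)$ forces either $p$ to be bounded or $n$ to lie in a very sparse set modulo $p^a$. First I would observe that for each $n$ with $P(n) > 0$ one has
$$m_a(P(n)) \leq \#\{p : p^a \mid P(n)\} = \sum_{\substack{p \text{ prime} \\ p^a \mid P(n)}} 1.$$
Since $P$ is separable, its discriminant $D$ is a nonzero integer; fix it once and for all. For a prime $p \nmid D$, the polynomial $P$ remains separable modulo $p$, so the congruence $P(n) \equiv 0 \pmod{p^a}$ has the same number of solutions modulo $p^a$ as modulo $p$ — call it $\rho(p) \leq \deg P$ — by Hensel lifting. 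Hence the number of $n \in \{1, \dots, N\}$ with $p^a \mid P(n)$ is at most $\rho(p)\lceil N/p^a\rceil \leq \deg P \cdot (N/p^a + 1)$.

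Next I would handle the $k$-th power by expanding: $m_a(P(n))^k \leq \big(\sum_{p^a \mid P(n)} 1\big)^k = \sum_{p_1, \dots, p_k} \mathbf{1}[p_1^a \mid P(n)] \cdots \mathbf{1}[p_k^a \mid P(n)]$, where the sum runs over $k$-tuples of primes. Summing over $n \leq N$ and splitting the tuple-sum according to the set of \emph{distinct} primes appearing, it suffices (after absorbing a constant $C_k$ counting the number of surjections from a $k$-set onto a $j$-set, $j \leq k$) to bound, for each $j \leq k$,
$$\sum_{0 < n \leq N} \ \sum_{\substack{p_1 < \dots < p_j \\ p_i^a \mid P(n) \ \forall i}} 1 \ = \ \sum_{p_1 < \dots < p_j} \#\{n \leq N : p_i^a \mid P(n), \ i=1,\dots,j\}.$$
By the Chinese Remainder Theorem (and bounding the contribution of the finitely many primes dividing $D$ separately, say by $\omega(D)^k N$ trivially, or rather by noting such primes contribute $O(N)$ with an implied constant depending only on $P$), the inner count is at most $(\deg P)^j \big(N/(p_1 \cdots p_j)^a + 1\big)$, and the total number of relevant tuples with $(p_1 \cdots p_j)^a \leq $ roughly $P(N) = O(N^{\deg P})$ is polynomially bounded, so the "$+1$" terms contribute $O(N)$. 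The main term is
$$N \cdot (\deg P)^j \sum_{p_1 < \dots < p_j} \frac{1}{(p_1 \cdots p_j)^a} \ \leq \ N \cdot (\deg P)^k \Big(\sum_{p} \frac{1}{p^a}\Big)^k,$$
and $\sum_p p^{-a}$ converges since $a \geq 2$. Combining, $\sum_{0 < n \leq N} m_a^k(P(n)) \leq C(P,k) \cdot N$ for a constant depending only on $P$ and $k$.

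The point where one must be slightly careful — the "main obstacle", though it is more bookkeeping than genuine difficulty — is the treatment of the finitely many bad primes $p \mid D$ and, more importantly, the "$+1$" error terms in the solution counts: one must check that after raising to the $k$-th power these do not accumulate to something larger than $O(N)$. This is fine because for a prime power $p_i^a$ to divide the positive integer $P(n)$ with $n \leq N$ one needs $p_i^a \leq P(N) \ll N^{\deg P}$, so each $p_i$ ranges over $O(N^{\deg P / a})$ values and the number of $j$-tuples contributing a "$+1$" is $O(N^{jk \deg P / a})$ — but such tuples only arise when $(p_1\cdots p_j)^a \le P(N)$, which forces the product of the corresponding moduli to be $O(N^{\deg P})$, and a dyadic/divisor-bound argument shows the total "$+1$" contribution is $O(N^{\deg P})$ divided by nothing useful; the cleaner route is to note $\#\{n \le N : p^a \mid P(n)\} \le \deg P \cdot (N p^{-a} + 1)$ is only nonzero when $p^a \le P(N)$, and then $\sum_{p^a \le P(N)} 1 = O(N^{\deg P / a} / \log N)$, so the product over $j \le k$ such primes of the "$+1$" terms is at most $N^{(\deg P/a) \cdot k}$ which is dwarfed by... — in fact the slickest fix is simply: bound $m_a(P(n)) \le m_a(P(n))$ pointwise by $\log_2 P(n) = O(\log N)$ is too weak; instead use that the "$+1$" terms correspond to $n$ in the empty range unless $p$ is small, and just cap the prime sum at $p \le N$, giving error $O((\deg P)^k \cdot (\pi(N))^k / N^{?})$ — I would in the writeup replace this paragraph's hand-waving with the clean statement that $\#\{n \le N : p^a \mid P(n)\} \le \deg P \cdot N / p^a$ \emph{whenever} $p^a \le N$ (absorbing the $+1$ since then $1 \le N/p^a$ up to the factor $\deg P$... adjust: $1 \le p^a \le N$ so $N/p^a \ge 1$), and note $p^a \mid P(n)$ with $P(n) > 0$, $n \le N$ implies $p^a \le P(N) \le c N^{\deg P}$, handling $N < p^a \le cN^{\deg P}$ by the trivial bound of at most $\deg P$ solutions each, of which there are $O(N^{(\deg P - 1)/a} \cdot \deg P)$-many primes — summing $k$ of these still beats $N$ for $a$ large but \emph{not} for $a = 2, \deg P$ large, so the honest argument must keep the $N/p^a$ shape throughout and sum $\sum_{p} p^{-a} < \infty$; the "$+1$" is legitimately absorbed only for $p^a \le N$, and for $N^{1/\deg P} \le p \le $ (roughly) $N$ with $p^a > N$ one uses instead that the number of \emph{integers} $n \le N$ hit is $\le \deg P$ and these correspond to $\le \deg P$ residues, summing to $O(N)$ over all such $p$ only if... — the correct clean lemma to invoke is simply $\sum_{p \le y} \#\{n \le N : p^a \mid P(n)\} \le \deg P \sum_{p} (N p^{-a} + 1) \cdot \mathbf 1[p^a \le c N^{\deg P}] = O(N) + O(N^{\deg P / a})$ and for $a \ge 2$, $\deg P / a$ could exceed $1$, so one genuinely needs $P(n) \ne 0$ bounded input — here $P(n) = O(N^{\deg P})$ but the KEY is $m_a(P(n)) \le (\log P(n))/(a \log 2) = O(\log N)$ combined with: the number of $n \le N$ for which \emph{any} prime $p > y$ has $p^a \mid P(n)$ is, for $y = N^{1/2}$ say, at most $\sum_{p > N^{1/2}} \deg P \cdot (N/p^2 + 1) = O(N/N^{1/2}) + O(N^{1/2}\cdot\ldots)$ — this works for $a = 2$ and then one iterates; I will write this out properly in the final version.

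(Remark: in the actual writeup I will streamline the above into: split primes into $p \le N^{1/\deg P}$ — finitely relevant, handled by CRT and $\sum p^{-a} < \infty$ giving $O(N)$ — and $p > N^{1/\deg P}$, where $p^a \mid P(n) \le cN^{\deg P}$ forces at most $\deg P \cdot c$ values of $p^a$... hence $O(1)$ large primes and each contributes $O(N^{1 - a/\deg P})$ values of $n$; raising to the $k$-th and summing stays $O(N)$.)
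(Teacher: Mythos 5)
Your overall strategy is the same as the paper's: bound $m_a(P(n))$ by the number of primes whose $a$-th power (the paper uses the second power, which suffices since $a\ge 2$) divides $P(n)$, expand the $k$-th power into a sum over $k$-tuples of primes, reduce to the set of distinct primes in the tuple, count solutions of $P(n)\equiv 0 \pmod{\Pi^a}$ via the Chinese Remainder Theorem (with Hensel's lemma giving $\nu(p^a)\le \deg P$ for good $p$), and conclude from the convergence of $\sum_p p^{-a}$ for $a\ge 2$. The main term of your computation is correct and coincides with the paper's.

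The gap is the ``$+1$'' discussion, which you rightly identify as the delicate point but then leave unresolved: the long middle paragraph is a chain of abandoned attempts, and the final parenthetical fix does not work as stated. Splitting at $p\le N^{1/\deg P}$ does not help, because a $k$-tuple of distinct ``small'' primes still produces a modulus $\Pi^a\le N^{ak/\deg P}$, which exceeds $N$ whenever $ak>\deg P$, so the $+1$ per residue class is not absorbed; and your treatment of the large primes mixes two incompatible counts (summing $\deg P\cdot(Np^{-a}+1)$ over all primes $p$ with $p^a\le P(N)$ reintroduces an error of size roughly $N^{\deg P/a}$, which can dwarf $N$). The clean fix is to take the threshold $y=N^{1/(ak)}$: for primes $p\le y$ every relevant modulus satisfies $\Pi^a\le N$, so $\#\{n\le N:\Pi^a\mid P(n)\}\le \nu(\Pi^a)(N\Pi^{-a}+1)\le 2\nu(\Pi^a)N\Pi^{-a}$ and the $+1$ is genuinely absorbed; for $p>y$ one has $p^a>N^{1/k}$, and since the product of all such $p^a$ dividing $P(n)$ is at most $P(n)=O(N^{\deg P})$, each individual $n\le N$ admits at most $k\deg P+O(1)$ such primes, so the large-prime contribution to $m_a(P(n))$ is $O_{P,k}(1)$ uniformly in $n$ and contributes $O(N)$ after raising to the $k$-th power (use $(x+y)^k\le 2^{k-1}(x^k+y^k)$). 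For what it is worth, the paper's own proof simply writes the count as $\nu(\Pi^2)N\Pi^{-2}$ with no $+1$ and no size restriction on $\Pi$, thereby eliding exactly the point you are worrying about; your instinct is sound, but the resolution you submitted is not correct, so the proof as written is incomplete.
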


\noindent
Note that Lemma \ref{lemma 6} fails in the case $a=1$ since $$\sum_{0<n \leq N} \omega(n) \sim N \cdot \log \log (N)$$ as $N$ tends to $\infty$ \cite{HR17}.

\begin{proof}
Let $N$ be a positive integer. Since $a \geq 2$, one has
\begin{equation} \label{eqboundinggeq2}
\sum_{0<n \leq N} m_{a}^k (P(n)) \leq \sum_{0<n \leq N}  \Big(\sum_{\substack{p^2 \vert P(n)}} 1 \Big)^k = \sum_{0 < n \leq N}  \sum_{\substack{(p_1,\dots,p_k) \\ {p_1^2 | P(n)} \\  {\dots} \\ {p_k^2 | P(n)} } }1.
\end{equation}
Pick two positive real numbers $\alpha$ and $\beta$ (depending only on the polynomial $P(T)$) such that $\sqrt{P(n)} \leq \alpha \cdot n^\beta$ for every positive integer $n$. By changing the order of summation in the right-hand side in (\ref{eqboundinggeq2}), we get
\begin{equation} \label{changeorder}
\sum_{0<n \leq N} m_{a}^k (P(n)) \leq \sum_{p_1 \leq \alpha \cdot N^\beta} \dots \sum_{p_k \leq \alpha \cdot N^\beta} \sum_{\substack{{ 0 < n \leq N} \\ {p_1^2 | P(n)} \\  {\dots} \\ {p_k^2 | P(n)} } } 1.
\end{equation}

Given a $k$-tuple $\underline{p}=(p_1,\dots,p_k)$ of prime numbers, let $S_{\underline{p}}$ be the set of distinct prime numbers appearing in $\underline{p}$ and set $\Pi_{\underline{p}} = \prod_{p \in S_{\underline{p}}} p$. Then, one has

\begin{equation} \label{prod}
\sum_{\substack{{ 0 < n \leq N} \\ {p_1^2 | P(n)} \\  {\dots} \\ {p_k^2 | P(n)} } } 1 = \sum_{\substack{ {0 < n \leq N} \\ {\Pi_{\underline{p}}^2 \vert P(n)}}} 1.
\end{equation}
Next, for each positive integer $M$, let $\nu(M)$ be the number of integers $m \in \{0,\dots, M -1\}$ such that $P(m) \equiv 0 \, \, {\rm{mod}} \, \, M$. Then,
\begin{equation} \label{prod2}
\sum_{\substack{ {0 < n \leq N} \\  {\Pi_{\underline{p}}^2 \vert P(n)}}} 1 \leq \nu(\Pi_{\underline{p}}^2) \cdot \frac{N}{\Pi_{\underline{p}}^2}.
\end{equation}
By the Chinese Remainder Theorem, one has
\begin{equation} \label{prod3}
\nu(\Pi_{\underline{p}}^2)=\prod_{p \in S_{\underline{p}}} \nu(p^2).
\end{equation}
Then, by \eqref{prod}, \eqref{prod2}, and \eqref{prod3}, we get
\begin{equation} \label{prod4}
\sum_{\substack{{ 0 < n \leq N} \\ {p_1^2 | P(n)} \\  {\dots} \\ {p_k^2 | P(n)} } } 1 \leq N \cdot \prod_{p \in S_{\underline{p}}} \frac{\nu(p^2)}{p^2}.
\end{equation}

Now, combine (\ref{changeorder}) and \eqref{prod4} to get
$$\sum_{0<n \leq N} m_{a}^k (P(n)) \leq N \cdot \sum_{p_1 \leq \alpha \cdot N^\beta} \dots \sum_{p_k \leq \alpha \cdot N^\beta} \prod_{p \in S_{\underline{p}}} \frac{\nu(p^2)}{p^2}$$
$$ \hspace{3cm} \leq N \cdot \sum_{m=1}^k  \binom{k}{m} \bigg(\sum_{p \leq \alpha \cdot N^\beta} \frac{\nu(p^2)}{p^2}\bigg)^m.$$
As $\nu(p^2) \leq {\rm{deg}}(P)$ for each prime $p$ not dividing the discriminant of $P(T)$, the inner series above is convergent, thus ending the proof.
\end{proof}

\begin{lemma} \label{lemma 7}
For each positive integer $k$, there exists some positive constant $C(k)$ such that
\begin{equation}
\bigg \vert \sum_{0<n \leq N} \Big({\rm{Ram}}_{E/\Qq(T)}(n) - \omega(P_E(n)) \Big)^k \bigg \vert \leq C(k) \cdot N
\label{9}
\end{equation}
for each positive integer $N$.
\end{lemma}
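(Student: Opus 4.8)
The plan is to assemble lemma~\ref{lemma 5} and lemma~\ref{lemma 6}. Lemma~\ref{lemma 5} tells us that, up to a bounded error, $\mathrm{Ram}_{E/\Qq(T)}(n) - \omega(P_E(n))$ equals $-\sum_{i=1}^r m_{e_i}(P_i(n))$, while lemma~\ref{lemma 6} tells us that each function $m_{e_i}\circ P_i$ has bounded average over $0<n\le N$. Since condition~\ref{star} is in force throughout \S\ref{4.1}, every positive integer $n$ satisfies $P_i(n)>0$ for all $i$; in particular no positive integer is a branch point, lemma~\ref{lemma 5} applies to every $n\ge 1$, and all the quantities $\omega(P_E(n))$ and $m_{e_i}(P_i(n))$ are well-defined.

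Concretely, I would fix the real number $C\ge 1$ provided by lemma~\ref{lemma 5}, so that for every $n\ge 1$
$$\mathrm{Ram}_{E/\Qq(T)}(n) - \omega(P_E(n)) = R(n) - \sum_{i=1}^r m_{e_i}(P_i(n)), \qquad |R(n)|\le C.$$
Then I would apply the convexity bound $\big|\sum_{j=0}^r x_j\big|^k \le (r+1)^{k-1}\sum_{j=0}^r |x_j|^k$ (Jensen for $t\mapsto t^k$, or iterating $(|a|+|b|)^k\le 2^{k-1}(|a|^k+|b|^k)$) with $x_0=R(n)$ and $x_i=-m_{e_i}(P_i(n))$, which gives
$$\Big| \mathrm{Ram}_{E/\Qq(T)}(n) - \omega(P_E(n)) \Big|^k \le (r+1)^{k-1}\Big( C^k + \sum_{i=1}^r m_{e_i}(P_i(n))^k \Big).$$
Summing over $0<n\le N$: the first term contributes at most $(r+1)^{k-1}C^k N$, and for each $i$ I invoke lemma~\ref{lemma 6}. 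Its hypotheses hold: $P_i(T)$ is separable (it equals $b_i$ times the monic minimal polynomial of $t_i$ over $\Qq$, as noted in the proof of lemma~\ref{lemma 3}, hence is separable in characteristic~$0$) and positive at all positive integers by~\ref{star}, while each ramification index $e_i$ of a branch point satisfies $e_i\ge 2$; therefore $\sum_{0<n\le N} m_{e_i}(P_i(n))^k \le C(P_i,k)\,N$. Taking $C(k)=(r+1)^{k-1}\big(C^k+\sum_{i=1}^r C(P_i,k)\big)$ and using $\big|\sum_n a_n\big|\le \sum_n |a_n|$ with $a_n=\big(\mathrm{Ram}_{E/\Qq(T)}(n) - \omega(P_E(n))\big)^k$ yields~\eqref{9}.

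There is essentially no obstacle beyond what is already packaged into lemmas~\ref{lemma 5} and~\ref{lemma 6}: the argument above is pure bookkeeping. If I had to single out the one delicate point, it is the verification that the hypotheses of lemma~\ref{lemma 6} genuinely apply here — above all that $e_i\ge 2$, which is precisely the feature that distinguishes $m_{e_i}$ from $\omega=m_1$ (for which the analogue of lemma~\ref{lemma 6} is false, by the Hardy--Ramanujan asymptotic) and which is what makes the cancellation in lemma~\ref{lemma 5} non-trivially useful. A secondary point is picking an inequality strong enough to promote the first-moment control of lemma~\ref{lemma 5} to control of the $k$-th moment, which is exactly the role of the convexity bound above.
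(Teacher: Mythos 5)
Your proof is correct and follows essentially the same route as the paper: decompose $\mathrm{Ram}_{E/\Qq(T)}(n)-\omega(P_E(n))$ via lemma~\ref{lemma 5} into a bounded term plus $-\sum_i m_{e_i}(P_i(n))$, control the $k$-th power by a convexity/H\"older-type inequality, and apply lemma~\ref{lemma 6} termwise (the paper uses a binomial expansion followed by H\"older where you use a single power-mean bound, but this is an immaterial difference). Your explicit verification that $e_i\ge 2$ and that $P_i$ is separable, so that lemma~\ref{lemma 6} genuinely applies, is a point the paper leaves implicit.
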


\begin{proof}
For each integer $n \geq 1$ which is not a branch point, set 
\begin{equation} \label{g(n)}
g(n)={\rm{Ram}}_{E/\Qq(T)}(n) - \omega(P_E(n)) + \sum_{i=1}^r m_{e_i}(P_i(n)).
\end{equation}
Denote the left-hand side in \eqref{9} by $f(N)$, $N \geq 1$. By \eqref{g(n)}, one has
\begin{equation} \label{f(N)}
f(N) \leq \sum_{0 <n \leq N} \sum_{m=0}^k |g(n)|^{k-m} \binom{k}{m} \bigg(\sum_{i=1}^r m_{e_i}(P_i(n)) \bigg)^m.
\end{equation}
Pick a real number $C \geq 1$ (depending only on $E/\Qq(T)$) such that $|g(n)| \leq C$ for each integer $n \geq 1$ (Lemma \ref{lemma 5}). Then, by \eqref{f(N)}, we get
\begin{equation}\label{f(N)2}
f(N) \leq (1+C)^k \cdot \sum_{0<n \leq N}\bigg(\sum_{i=1}^r m_{e_i}(P_i(n)) \bigg)^k.
\end{equation}
By H\"older's inequality, one has
\begin{equation} \label{Holder}
\bigg(\sum_{i=1}^r m_{e_i}(P_i(n)) \bigg)^k \leq r^{k-1} \cdot  \sum_{i=1}^r m_{e_i}^k(P_i(n))
\end{equation}
for each positive integer $n \leq N$. 
Then, combine \eqref{f(N)2} and \eqref{Holder} to get
$$f(N) \leq (1+C)^k \cdot r^{k-1} \cdot \sum_{i=1}^r \sum_{0<n \leq N} m_{e_i}^k(P_i(n)).$$
It then remains to apply Lemma \ref{lemma 6} to the polynomials $P_1(T), \dots, P_r(T)$ to finish the proof of Lemma \ref{lemma 7}.
\end{proof}

\subsubsection{Conclusion}

We can now complete the proof of Proposition \ref{BSL cond}. As condition \ref{star} has been assumed to hold, we may apply \cite[Theorem 4]{Hal56} and a classical result of Landau (see, e.g., \cite[$\S$XV.33, 1) b)]{SMC06}) to get
\begin{equation} \label{Hal}
\lim\limits_{\substack{N \to \infty }} \hspace{-0.5mm} \frac{1}{N} \hspace{-0.5mm} \sum_{0<n \leq N} \hspace{-1mm} \bigg(\frac{\omega(P_E(n)) - r \log \log(N)}{\sqrt{r \log \log (N)}} \bigg)^k \hspace{-1mm} = \hspace{-0.5mm} \frac{1}{\sqrt{2 \pi}} \int_{- \infty}^{+\infty} \hspace{-1mm} t^k e^\frac{-t^2}{2} \, dt
\end{equation}
for each integer $k \geq 1$. It then remains to combine \eqref{Hal} and Lemma ~\ref{lemma 7} to finish the proof of Proposition \ref{BSL cond}. $\hfill \square$

\subsection{Proof of Theorem \ref{BSL}} \label{4.2}

It suffices to show that condition \ref{star} from Proposition \ref{BSL cond} is redundant.

For each index $i \in \{1,\dots,r\}$, the leading coefficient $b_i$ of the polynomial $P_i(T)$ has been assumed to be positive. Hence, there exists some positive integer $\alpha$ such that $P_i(n+\alpha) > 0$ for each $i \in \{1,\dots,r\}$ and each positive integer $n$. Set $U=T-\alpha$. Then, condition \ref{star} holds for the extension $E/\Qq(U)$. Fix a positive integer $k$. Then, Proposition \ref{BSL cond} gives that 
$$\lim\limits_{\substack{N \to \infty }} \frac{1}{N} \sum_{0<n \leq N} \bigg(\frac{{\rm{Ram}}_{E/\mathbb{Q}(U)}(n) - r \log \log(N)}{\sqrt{r \log \log (N)}} \bigg)^k = \frac{1}{\sqrt{2 \pi}} \int_{- \infty}^{+\infty} t^k e^\frac{-t^2}{2} \, dt.$$

For each positive integer $n$, the specialization of the extension $E/\Qq(U)$ at $n$ and the specialization of the extension $E/\Qq(T)$ at $n+\alpha$ coincide. Hence, one has 
$$\lim\limits_{\substack{N \to \infty }} \frac{1}{N} \hspace{-1mm} \sum_{\alpha<n \leq N +\alpha} \hspace{-2mm} \bigg(\frac{{\rm{Ram}}_{E/\mathbb{Q}(T)}(n) - r \log \log(N)}{\sqrt{r \log \log (N)}} \bigg)^k = \frac{1}{\sqrt{2 \pi}} \int_{- \infty}^{+\infty} t^k e^\frac{-t^2}{2} \, dt.$$
One has
$$ \sum_{0 <n \leq \alpha} \bigg(\frac{{\rm{Ram}}_{E/\mathbb{Q}(T)}(n) - r \log \log(N)}{\sqrt{r \log \log (N)}} \bigg)^k \hspace{-1mm} = O((\log \log (N))^{k/2}), \quad N \to  \infty$$
and, by Lemma \ref{lemma 1}, one has
$$ \sum_{N<n \leq N + \alpha} \bigg(\frac{{\rm{Ram}}_{E/\mathbb{Q}(T)}(n) - r \log \log(N)}{\sqrt{r \log \log (N)}} \bigg)^k = O \bigg( \bigg(\frac{\log(N)}{\log \log(N)}\bigg)^k \bigg)$$
as $N$ tends to $\infty$. Hence, $$\lim\limits_{\substack{N \to \infty }} \frac{1}{N} \sum_{0<n \leq N} \bigg(\frac{{\rm{Ram}}_{E/\mathbb{Q}(T)}(n) - r \log \log(N)}{\sqrt{r \log \log (N)}} \bigg)^k = \frac{1}{\sqrt{2 \pi}} \int_{- \infty}^{+\infty} t^k e^\frac{-t^2}{2} \, dt,$$
as needed. $\hfill \square$

\section{A final remark on the non-Galois case} \label{5}

We conclude our paper by noticing that our results can easily be extended to the situation of arbitrary finite extensions of $\Qq(T)$. 

Let $T$ be an indeterminate and $E/\mathbb{Q}(T)$ a finite extension (which is not necessarily Galois). Denote its Galois closure by $\widehat{E}/\Qq(T)$. Note that the sets of branch points of $E/\Qq(T)$ and $\widehat{E}/\Qq(T)$ are the same.

First, we recall what are the specializations of $E/\Qq(T)$. Fix a point $t_0 \in \mathbb{P}^1(\mathbb{Q}$) which is not a branch point of $\widehat{E}/\Qq(T)$. Denote the prime ideals lying over $(T-t_0) {\Qq}[T-t_0]$ in $E/\Qq(T)$ by $\mathcal{P}_1, \dots, \mathcal{P}_s$. For each $l \in \{1,\dots,s\}$, the residue field at $\mathcal{P}_l$ is denoted by $E_{t_0,l}$ and the extension $E_{t_0,l}/\Qq$ is called a {\it{specialization}} of $E/\Qq(T)$ at $t_0$. The compositum in $\overline \Qq$ of the Galois closures of all specializations of the extension $E/\Qq(T)$ at $t_0$ is the specialization of the Galois closure $\widehat{E}/\Qq(T)$ at $t_0$.

Given an integer $n \geq 1$ which is not a branch point of $\widehat{E}/\Qq(T)$, let ${\rm{Ram}}_{E/\mathbb{Q}(T)}(n)$ be the number of prime numbers $p$ ramifying in some specialization $E_{n,l}/\Qq$ of $E/\Qq(T)$ at $n$. As $p$ ramifies in the compositum of finitely many extensions of $\Qq$ if and only if it ramifies in at least one of them
, we get ${\rm{Ram}}_{E/\mathbb{Q}(T)} \equiv {\rm{Ram}}_{\widehat{E}/\mathbb{Q}(T)}$. Then, Theorems \ref{BSL} and \ref{BSL bis} as well as their corollaries extend to the non-Galois case.

\end{document}